\title{Bitangents to symmetric quartics}
\author{Candace Bethea and Thomas Brazelton}
\newcommand{\linecolorone}{green}
\newcommand{\linecolortwo}{pink}
\newcommand{\linecolorthree}{blue}
\newcommand{\linecolorfour}{orange}
\newcommand{\quarticpicwidth}{0.4\linewidth}
\let\genfontsize\tiny
\newcommand{\gpdiagram}[2]{{\color{gray}\mathrm{#1}}\,|\,#2}
\begin{document}
\maketitle

\begin{abstract}
    Recall that a non-singular planar quartic is a canonically embedded non-hyperelliptic curve of genus three. We say such a curve is \textit{symmetric} if it admits non-trivial automorphisms. The classification of (necessarily finite) groups appearing as automorphism groups of non-singular curves of genus three dates back to the last decade of the 19th century. As these groups act on the quartic via projective linear transformations, they induce symmetries on the 28 bitangents. Given such an automorphism group $G=\Aut(C)$, we prove the $G$-orbits of the bitangents are independent of the choice of $C$, and we compute them for all twelve types of smooth symmetric planar quartic curves. We further observe that techniques deriving from equivariant homotopy theory directly reveal patterns which are not obvious from a classical moduli perspective.
\end{abstract}

\section{Introduction}

The study of bitangents to plane quartics dates back to the 19th century, and has been one of the pillars of classical enumerative algebraic geometry. In 1850, Jacobi \cite{D315} was the first to compute that the number of bitangents to a smooth planar quartic was 28, using work of Pl\"{u}cker from two decades earlier \cite{D442}. This was extended to general quartics shortly thereafter by Hesse \cite{D288}. The configurations of these lines, their combinatorics, and their sweeping connections throughout mathematics were objects of deep fascination throughout the 19th century, and their connections to other fields of geometry, e.g., Lie algebras, continue to be topics of interest to modern mathematicians \cite{Manivel}. The automorphism groups of genus three (non-hyperelliptic) curves have been completely classified, and we provide a historical overview of this work in \autoref{subsec:aut-quartics}.
Since a planar quartic is a canonically embedded non-hyperelliptic genus three curve, any automorphism of a genus three curve will extend to an automorphism of the projective plane, and such an automorphism group will be finite by the Hurwitz formula. In particular the bitangents of such a curve will inherit a natural action from the automorphism group of the curve. We prove the action on the bitangents is independent of the choice of symmetric quartic, and we compute it in each case.

\begin{theorem}\label{thm:main} 
Let $G$ be any group which appears as the automorphism group of a genus three non-hyperelliptic curve. Then for any such curve $C$ with $G \cong \Aut(C)$, the action of $G$ on the bitangents of $C$ is independent of the choice of $C$, and is given by the following $G$-set:\footnote{See \autoref{section:computations} for an explanation of the notation.}
\begin{center}
    \begin{tabular}{|r | c | l || l|}
    \hline
    Type & $\Aut(C)$  & Order &  28 bitangents \\
    \hline
    \hline
    \hyperref[subsec:Klein]{I} & $\PSL_2(7)$ & 168 &  $[\PSL_2(7)/S_3]$ \\
    \hyperref[subsec:Dyck]{II} &  $C_4^{\times 2} \rtimes S_3$ & 96 &$[C_4^{\times 2} \rtimes S_3/C_6] + [C_4^{\times 2} \rtimes S_3/C_8]$ \\
    \hyperref[subsec:III]{III} & $C_4\circledcirc A_4$  & 48 & $[C_4 \circledcirc A_4 / C_2] + [C_4 \circledcirc A_4 / C_{12}]$ \\
    \hyperref[subsec:IV]{IV} & $S_4$ & 24 & $[S_4/C_2^o] + [S_4/C_2^e] + [S_4/S_3]$ \\
    \hyperref[subsec:V]{V} & $P$ & 16 &  $[P/C_2^{(1)}] + [P/C_2^{(2)}] + [P/C_2^{(3)}] + [P/C_4^Z]$ \\
    \hyperref[subsec:VI]{VI} & $C_9$ & 9 &  $3[C_9/e] + [C_9/C_9]$ \\
    \hyperref[subsec:VII]{VII} & $D_8$ & 8 &  $[D_8/e] + 2[D_8/C_2^{(1)}] + 2[D_8/C_2^{(2)}] + [D_8/C_2^Z]$ \\
    \hyperref[subsec:VIII]{VIII} & $C_6$ & 6 &  $4[C_6/e] + [C_6/C_2] + [C_6/C_6]$ \\  
    \hyperref[subsec:IX]{IX} & $S_3$  & 6 &  $3[S_3/e] + 3[S_3/C_2] + [S_3/S_3]$ \\
    \hyperref[subsec:X]{X} & $K_4$ & 4 &  $5[K_4/e] + 2[K_4/C_2^L] + 2[K_4/C_2^R]$ \\    
    \hyperref[subsec:XI]{XI} & $C_3$ & 3 &  $9[C_3/e] + [C_3/C_3]$ \\
    \hyperref[subsec:XII]{XII} & $C_2$ & 2 &  $12[C_2/e] + 4[C_2/C_2]$ \\
    \hline
    \end{tabular}
\end{center}
\end{theorem}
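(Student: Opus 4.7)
The plan is to decouple the two claims of the theorem --- invariance of the $G$-set of bitangents across the moduli of curves of a fixed type, and explicit identification of this $G$-set in each of the twelve types --- and handle them independently.

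For invariance, the key observation is that the moduli space of pairs $(C,G)$ where $C$ is a smooth non-hyperelliptic genus three curve and $G \cong \Aut(C)$ of the prescribed type is connected for each of the twelve strata in the classification (see \autoref{subsec:aut-quartics}). The 28 bitangents of the universal curve over such a stratum form an \'etale cover of degree 28 equipped with a fiberwise $G$-action, so the orbit-stabilizer decomposition --- a discrete invariant of the fiber --- is locally constant, hence constant on each stratum. Equivalently, once $G$ is fixed as an abstract group of a given type, the projective representation $G \to \PGL_3(\C)$ realizing the action on $\P^2 = \P(H^0(C,\omega_C))$ is determined up to conjugation, which pins down the $G$-action on any line of $\P^2$ and in particular on the bitangents.

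For the explicit identification, I would proceed type-by-type along the lines of \autoref{section:computations}. For each type the plan is: (i) fix a standard normal form $F(x,y,z)=0$ for a representative curve $C$, such as the Klein quartic $x^3 y + y^3 z + z^3 x$ in type I, the Dyck-type model in type II, and Ciani-pencil representatives in the lower-symmetry cases; (ii) realize $\Aut(C)$ as an explicit subgroup of $\PGL_3(\C)$ by writing down matrix generators; (iii) enumerate the 28 bitangents, either by solving the classical system cutting out effective theta characteristics, or --- when $|G|$ is large --- by computing a small number of easily-found representatives and generating the remainder under the group action; and (iv) compute stabilizers and orbits by letting the matrix generators act on the defining linear forms of the bitangents.

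The main obstacle will be managing steps (iii) and (iv) in the low-symmetry cases (types VIII--XII), where $|G|$ is small and the 28 bitangents split into many small orbits; here there is no symmetry shortcut, and one must enumerate bitangents essentially by direct calculation or computer algebra. A secondary but persistent obstacle is distinguishing non-conjugate copies of isomorphic stabilizer subgroups --- for example, the three distinct $C_2^{(i)}$ inside the group $P$ of order $16$ in type V, or the three non-conjugate subgroups $C_2^o$, $C_2^e$, $S_3$ of $S_4$ appearing in type IV --- which requires fixing a consistent labelling of subgroups of $G$ and tracking carefully which conjugacy class stabilizes each bitangent. Once this bookkeeping is in place, the theorem reduces to twelve finite, mechanical verifications of orbit-stabilizer decompositions of a $28$-element $G$-set.
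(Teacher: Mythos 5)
Your overall decomposition matches the paper's --- invariance first, then twelve explicit orbit computations, and your steps (i)--(iv) for the latter are exactly what the paper does in \autoref{section:computations} (normal forms from \autoref{thm:aut-gps-quartic}, matrix generators, computer-assisted enumeration of the 28 lines, then orbit/stabilizer bookkeeping). Where you genuinely diverge is the invariance argument. The paper proves it by exhibiting the bitangent count as the zero locus of a $G$-equivariant section of an equivariant vector bundle $E\to X$ (\autoref{thm:euler-class}) and invoking equivariant conservation of number for the $\MU_G$-valued Euler class, pulled back injectively to the Burnside ring (\autoref{cor:independence}). You instead argue by connectedness of each stratum plus local constancy of the orbit-type decomposition of the degree-28 \'etale cover of bitangents --- a classical monodromy argument. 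This works, but note it leans on the normal forms of \autoref{thm:aut-gps-quartic} in an essential way: connectedness of each stratum is exactly the statement that every curve of a given type is projectively equivalent to a member of a single irreducible family (a Zariski-open subset of some $\A^k_{\C}$ with the group acting by \emph{fixed} matrices), and you should say this rather than assert connectedness outright. Your approach is more elementary and avoids equivariant homotopy theory entirely; what the paper's approach buys in exchange is a uniform framework in which the answer is an Euler class functorial under restriction to subgroups, which is what powers the observations in \autoref{sec:subgroups} (e.g.\ why the $D_8$-restriction of the Type IV answer agrees with the Type VII answer even though those strata are not adjacent in moduli).

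One sentence of yours is not correct as stated: the claim that, ``equivalently,'' fixing the conjugacy class of the representation $G\to\PGL_3(\C)$ ``pins down the $G$-action \ldots on the bitangents.'' It pins down the $G$-action on the space of \emph{all} lines in $\P^2$, but which $28$ lines are bitangent depends on the curve, and nothing in that sentence alone prevents two $G$-invariant quartics from having bitangent sets that are non-isomorphic as $G$-subsets of $(\P^2)^\vee$. This is precisely the content that either the monodromy argument or the Euler class argument must supply, so you should delete the ``equivalently'' and rely solely on the connectedness-plus-local-constancy reasoning.
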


\begin{example}\label{exa:edge-D8} 
As an example, we visualize the 28 bitangents on the following \textit{Edge quartic}.\footnote{%
See \autoref{rmk:edge} for a further discussion of Edge quartics.
}
This admits a clear $D_8$ symmetry on the $xy$-plane, which extends to an $S_4$-symmetry by passing between the different affine patches.
\begin{figure}[h]
  \includegraphics[width=0.4\linewidth]{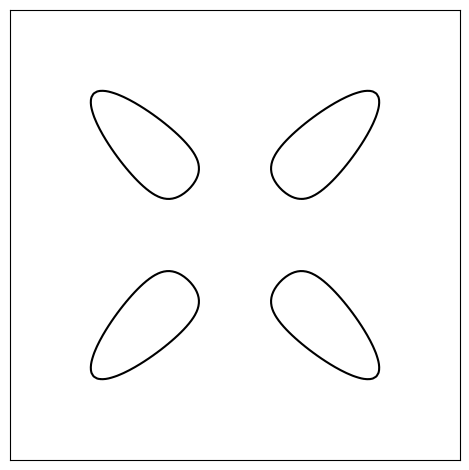}
   \includegraphics[width=0.4\linewidth]{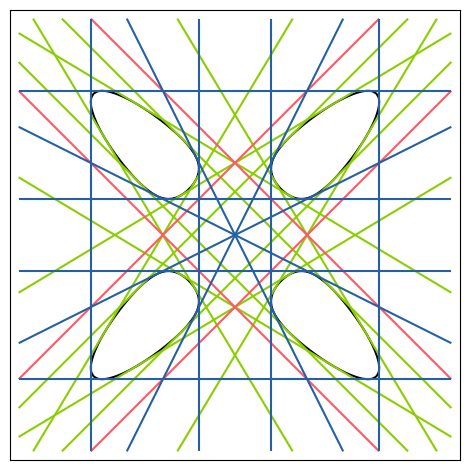}
  \centering
  \caption{The \textit{Edge quartic}, defined by the equation $25(x^4 + y^4 + z^4) -34(x^2y^2 + x^2z^2 + y^2z^2)$, and its 28 real bitangents graphed with their $S_4$ orbits.}
  \label{fig:edge-d8}
\end{figure}
\end{example}

We prove \autoref{thm:main} by leveraging the machinery of \cite{EEG} by first formulating the count of bitangents on a $G$-symmetric quartic as a $G$-equivariant vector bundle, and computing its $\MU_G$-valued Euler class --- this proves that the action of $G\cong\Aut(C)$ on the bitangents is independent of $C$. We pick candidate curves in each automorphism class from \cite{Dolgachev,Henn} and compute equations for each of the bitangents using code modified from \cite{PSV}. After this we determine the orbits of lines and their isotropy under the action of the automorphism group. We provide figures in each case, and discuss rationality where it is relevant.

\begin{remark} \textit{(On related literature)}
\begin{itemize}
    \item The orbits of the automorphism group of the Klein quartic on its 28 bitangents is well-studied, and visualizable over $\P^1_{\mathbb{F}_7}$ (\cite{Dolg134,Dolg318}). In general the equivariant enumerative geometry of the Klein quartic has been a popular direction of research since the mid-to-late 1800's --- the action on the even theta characteristics is in \cite{Dolgachev-Kanev}, and the action on the Steiner complexes and Aronhold sets in \cite{Dolg318}. See \cite[Remark~6.5.4]{Dolgachev} for more information.
    \item The bitangents  of non-hyperelliptic curves of genus three with cyclic automorphisms of degree 3, 6, or 9 was recently studied in a paper of Liang \cite{Liang}.
\end{itemize}
\end{remark}

\begin{remark} \textit{(On rationality)} In 1873, Zeuthen proved that a smooth real quartic can have 3, 8, 16, or 28 bitangents, and this depends precisely on the topology of the real curve \cite{Zeuthen}. One might ask whether results in this paper can make general statements about the number of real bitangents to smooth real planar quartics with given automorphism group, in a way analogous to \cite[Theorem~1.3]{EEG}. When the automorphism group is real (lies in $\PGL_3(\R)$), orbits of odd size necessarily are defined over the reals. Beyond this, we cannot say much as there isn't a clear connection between automorphism types (in the decomposition in \autoref{thm:main}) and the real topology of a curve ---  we can observe this explicitly by traveling through a family of curves of a given type and seeing that the number of real bitangents can vary wildly. For example a Type XII quartic can have no real points, two non-nested ovals, two nested ovals, or three ovals. 
\end{remark}

\subsection{Outline} The paper is organized as follows. Section \ref{section:background} gives background information on canonical quartic plane curves which groups can arise as non-trivial automorphisms of them. Section \ref{section:euler} defines the vector bundle which parameterizes the enumerative problem of counting bitangents to symmetric quartics, following the construction of \cite{LarsonVogt}, shows it's an equivariant vector bundle for any group $G$ which arises as a non-trivial automorphism group of a canonical quartic, and leverages the equivariant Euler number of \cite{EEG} to conclude that the equivariant count of bitangents on a $G$-symmetric quartic is independent of the choice of quartic. Section \ref{section:computations} gives example quartics for each group and graphs orbits of real bitangents for each. Section \ref{sec:subgroups} concludes the paper with a subgroup lattice of automorphism groups of canonical quartics. 

\subsection{Data availability}

Throughout the paper are various \textbf{Computation} environments. These refer to numerical solutions for bitangents to example quartic curves for each type, whose orbits under the automorphism group of the curve were then checked on a computer (but can also be verified by hand). Explicit equations for all the lines for the example quartics referenced, grouped into their orbits, as well as the code the authors used to compute bitangents to planar quartics is available here:
\begin{center}
    \url{https://tbrazel.github.io/supplementary/lines.pdf}
\end{center}

\subsection{Acknowledgements} The second named author would like to thank Joe Harris for helpful conversations related to this work. Candace Bethea was supported by the National Science Foundation
award DMS-240209 for the duration of this work. Thomas Brazelton was supported by an NSF Postdoctoral Research Fellowship (DMS-2303242).

\section{Background}\label{section:background}

\subsection{Quartics, the basics}
Nonsingular curves of genus $g\ge2$ come in two flavors, being \textit{canonical} and \textit{hyperelliptic}, depending on whether the canonical class is very ample. When the canonical class of $C$ is very ample, we have that $\Gamma(C,\Omega^1_C)$ is a vector space of dimension $g$, and a choice of basis of holomorphic forms defines an embedding $C \to \P^{g-1}$ called a \textit{canonical embedding}. Any automorphism induces an automorphism of $\Omega^1_C$ by pullback, and therefore extends to an automorphism of $\P^{g-1}$, hence the automorphism groups of canonical curves are all \textit{geometric} in the sense that they act on the embedded curve via projective linear transformations.

It is a classical fact that a general non-hyperelliptic curve of genus $g\ge 3$ admits no nontrivial automorphisms. This motivates the following definition.

\begin{definition} We say a non-hyperelliptic non-singular curve of genus $g$ is \textit{symmetric} if it admits any automorphism other than the identity.
\end{definition}

\subsection{Automorphisms of smooth quartics}\label{subsec:aut-quartics} Here we provide a highly incomplete history of the classification of the automorphism groups of genus three non-singular hyperelliptic curves. This is a history which is scattered throughout the literature, and references papers plagued by minor errors, so we feel it is worth recounting.

In 1878, Schwarz proved that the automorphism group of a compact Riemann surface of genus $g\ge 2$ is finite.  One argues this by observing that for a non-hyperelliptic curve, the automorphism group acts faithfully on the Weierstrass points of the curve, of which there are only finitely many. In 1892, Hurwitz determined an upper bound on the size of such automorphism groups, being $84(g-1)$ \cite{Hurwitz}. This bound is not always attained, however a famous example is the so-called \textit{Klein quartic} (Type I in our classification), which was studied extensively by Felix Klein \cite{Klein}. When $g=3$, the Klein quartic has an automorphism group of size 168, attaining the upper bound proven by Hurwitz.

During this time, Kantor had been working on understanding what kinds of periodic transformations existed on the projective plane, informed by algebraic questions arising from the Hermite and Frobenius programs on quadratic forms \cite{Kantor2}.
In modern language, we might understand this as the study of cyclic subgroups of $\PGL_3(\C)$. In the course of this work, Kantor observed the following result.

\begin{theorem} \cite[p.~40]{Kantor} Given any finite subgroup $G\le\PGL_3(\C)$, there exists a planar curve of genus $g\ge 3$ which is $G$-invariant.
\end{theorem}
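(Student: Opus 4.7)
The plan is to produce a smooth plane curve $C\subset\P^2$ of degree $N\ge 4$ which is $G$-invariant; the genus--degree formula will then give $g(C)=\binom{N-1}{2}\ge 3$. First I would lift $G$ to a finite linear action. The surjection $\SL_3(\C)\to\PGL_3(\C)$ has kernel $\mu_3$, so the preimage $\til{G}$ of $G$ is a finite subgroup of $\SL_3(\C)$ of order $3|G|$. Any $\til{G}$-invariant homogeneous polynomial in $R:=\C[x,y,z]$ cuts out a $G$-invariant plane curve, since the central $\mu_3\le\til{G}$ acts trivially on $\P^2$.

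Next I would study the graded invariant ring $R^{\til G}$. By the Hilbert--Noether theorem, this is a finitely generated graded $\C$-algebra; fix homogeneous generators $f_1,\ldots,f_k$ of degrees $d_1,\ldots,d_k$. The quotient morphism $\C^3\to\Spec R^{\til G}$ is finite, and the $\til G$-orbit of the origin in $\C^3$ is the singleton $\{0\}$, so $\til G$-invariants separate the origin from any other point of $\C^3$. Consequently $V(f_1,\ldots,f_k)=\{0\}$ inside $\C^3$, so the $f_i$ share no zero on $\P^2$. Taking $N$ to be a common multiple of $d_1,\ldots,d_k$ with $N\ge 4$, the elements $f_i^{N/d_i}\in R^{\til G}_N$ witness that the complete linear system of $\til G$-invariant hypersurfaces of degree $N$ on $\P^2$ is basepoint-free. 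Since Molien's formula shows $\dim R^{\til G}_N$ grows quadratically with $N$, after possibly enlarging $N$ we may assume this invariant linear system has projective dimension at least $1$.

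Finally I would invoke Bertini's theorem in characteristic zero: a generic member of a basepoint-free linear system of positive projective dimension on the smooth variety $\P^2$ is smooth as a divisor. A generic $\til G$-invariant $f\in R^{\til G}_N$ therefore cuts out a smooth plane curve $C\subset\P^2$ of degree $N\ge 4$, which is $G$-invariant by construction and satisfies $g(C)=\binom{N-1}{2}\ge 3$.

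The main obstacle is the basepoint-freeness step; without it Bertini cannot be used to extract a smooth member and one might fear that every $G$-invariant plane curve is forced to be singular. The separation-of-orbits property of invariants under a finite linear action --- equivalently, the finiteness of $\C^3\to\Spec R^{\til G}$ --- is precisely what makes basepoint-freeness (and hence the whole argument) go through.
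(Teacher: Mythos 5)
The paper does not actually prove this statement: it is quoted as a citation to Kantor's 1890s work, with no argument reproduced, so there is nothing internal to compare against. Your proof is correct and complete on its own terms, and in fact establishes slightly more than is asserted, namely the existence of a \emph{smooth} $G$-invariant plane curve of degree $N\ge 4$ (smooth plane curves are automatically irreducible, so the genus $\binom{N-1}{2}\ge 3$ is unambiguous). The two load-bearing steps are both handled properly: (i) lifting $G$ to the finite preimage $\til{G}\le\SL_3(\C)$, so that one can work with honestly invariant forms rather than semi-invariants, and (ii) the observation that the null-cone $V(f_1,\dots,f_k)$ of a finite linear group is just the origin --- because invariants separate the (closed, finite) orbits --- which is exactly what makes the degree-$N$ invariant system basepoint-free and licenses Bertini in characteristic zero. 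Two small remarks. First, the appeal to Molien's formula is unnecessary: a basepoint-free system of positive-degree forms on $\P^2$ cannot have projective dimension $0$, since a single form $f$ has $V(f)\ne\emptyset$. Second, since the central $\mu_3\le\til{G}$ acts on degree-$d$ forms by $\zeta_3^{d}$, invariants occur only in degrees divisible by $3$; this is harmless for your argument (every $d_i$ is then divisible by $3$, and you only need some common multiple $N\ge 4$), but it is worth being aware that the naive expectation of invariants in every degree fails. Your route --- Hilbert--Noether finite generation plus orbit separation plus Bertini --- is the standard modern proof and is certainly not Kantor's original argument, which predates these tools and rests on the explicit study of finite collineation groups of the plane; the modern argument has the advantage of requiring no classification whatsoever.
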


Given this result, one might fix a genus $g$ and study which finite groups can act on smooth non-hyperelliptic planar curves of genus $g$, and one may ask what the full automorphism groups of such curves are. Kantor claimed in this same work to classify the automorphism groups of plane quartic curves, however this classification contained many errors which were correctly shortly thereafter by Wiman in 1896 \cite{Wiman}.
In particular, following Hurwitz' work \cite{Hurwitz},
one knows that the prime factors of $\#\Aut(C)$ will only ever be 2, 3, or 7, which was known to Wiman \cite[p.~223]{Wiman}. Although it does not satisfy modern standards of rigor, Wiman correctly identifies all the possible orders of automorphism groups of genus three non-hyperelliptic non-singular curves, and provides an indication as to the generators of these groups.

In 1910, Ciani extended this classification to all irreducible quartics (not necessarily nonsingular) \cite{Ciani-paper}. There are subtleties, however, in such a classification, as we can see in the following example, where we drift beyond the reach of the Hurwitz theorem.

\begin{example} Observe that the quartic $x^3y+z^4$ (which is singular at the point $[0:1:0]$) admits a symmetry of arbitrary order $r\ge 7$ given by $(x,y,z) \mapsto (\zeta_r x, \zeta_r^{r-3}y, z)$ \cite[N.~19]{Ciani-paper}. In particular its automorphism group is necessarily infinite.
\end{example}

Throughout the 20th century, it is unclear which results were known or lost in the literature. Henn recomputed the automorphism groups of genus three non-hyperelliptic curves via the action on Weierstrass points in his 1976 work, obtaining a classification of finite groups occurring as automorphism groups of smooth planar quartics \cite{Henn}. This essentially recovered the computation done by Wiman nearly a century earlier.

Around the mid-1960's, many mathematicians published work studying compact Riemann surfaces equipped with non-trivial involutions. This body of work is far too encompassing to mention all of the relevant players, but we highlight a particular program of mathematics emanating from work of A. Kuribayashi, which studied the moduli of compact Riemann surfaces equipped with non-trivial automorphisms \cite{Kuribayashi66,Kuribayashi76}, leveraging tools from Teichm\"uller theory. In \cite{KK} the authors study smooth projective genus three curves which are cyclic covers of the projective line or of an elliptic curve, and determine their automorphism groups, again recovering the classification initially due to Wiman.

At the dawn of the 21st century, as computer-aided research became increasingly ubiquitous in mathematics, Breuer made tremendous progress by determining all finite groups which act on a curve of genus $\le 48$ \cite{Breuer}. This leveraged group-theoretic constraints on such groups arising from covering space theory, and was accomplished via an exhaustive search through a library of finite groups in GAP \cite{GAP}. Magaard, Shaska, Shpectorov and V\"olkein provide a lovely discussion of how to re-derive the classification for genus $g=3$ from the data obtained by Breuer (that paper also contains rich historical background for the interested reader) \cite{MSSV}.

A beautiful resource for automorphisms of quartics can be found in Dolgachev's book \cite[\S6.5]{Dolgachev}, which recasts the research of Kantor and Wiman in modern language and rigor. Before this book was published, a fantastic paper of Bars \cite{Bars} sketches the classification of automorphism groups of quartics, following both Henn's thesis and the aforementioned paper of Komiya and Kuribayashi. Bars' note corrected errors in the classification appearing in the lecture notes which would later become Dolgachev's book.

With all this in mind, we state the following classification, which we feel it is fair to attribute to Wiman, although we refer the reader to \cite[p.~270]{Dolgachev} for a rigorous proof.

\begin{theorem}\label{thm:aut-gps-quartic} \cite{Wiman} The following groups $G$ are the only groups occurring as the automorphism group of a smooth plane quartic. In each case we provide an equation for quartics $F(x,y,z)$ of that type, with generators for its automorphism group (c.f. \cite{Bars,Dolgachev}). Here $\zeta_n$ denotes a primitive $n$th root of unity, and in $\PSL_2(7)$ we have $\alpha:= \frac{\zeta_7^5 - \zeta_7^2}{\sqrt{-7}}$, $\beta=\frac{\zeta_7^6 - \zeta_7}{\sqrt{-7}}$ and $\gamma = \frac{\zeta_7^3 - \zeta_7^4}{\sqrt{-7}}$
\begin{center}
    \begin{tabular}{|r | c | l | | p{4.7cm} | p{6cm} |}
    \hline
    Type & $\Aut(C)$  & GAP ID  & $F(x,y,z)$ & Generators in $\PGL_3(\C)$\\
    \hline
    \hline
    \hyperref[subsec:Klein]{I} & $\PSL_2(7)$  &  \texttt{[168,42]} & $x^3y+y^3z+z^3x$ & {\genfontsize%
    $\begin{pmatrix}
    \zeta_7^4 & 0 &0 \\
    0 & \zeta_7^2 & 0 \\
    0 & 0 & \zeta_7
    \end{pmatrix}$,
    $\begin{pmatrix}
    0 & 1 & 0 \\
    0 & 0 & 1 \\
    1 & 0 & 0
    \end{pmatrix}$,
    $\begin{pmatrix}
    \alpha &   \gamma & \beta \\
   \beta  &   \alpha & \gamma \\
    \gamma  &   \beta & \alpha     
    \end{pmatrix} $%
    }\\
    \hyperref[subsec:Dyck]{II} & $C_4^{\times 2} \rtimes S_3$  & \texttt{[96,64]} & $x^4 + y^4 + z^4$ & {\genfontsize $\begin{pmatrix} 0 & 0 & 1 \\ 1 & 0 & 0 \\ 0 & 1 & 0 \end{pmatrix}$, $\begin{pmatrix} -i & 0 & 0 \\ 0 & 0 & 1 \\ 0 & i & 0 \end{pmatrix}$}  \\
    \hyperref[subsec:III]{III} & $C_4 \circledcirc A_4$  & \texttt{[48,33]} & $x^4+y^4+z^4 + (4\zeta_3+2)x^2y^2$ & {\genfontsize%
    $\begin{pmatrix} 
    \frac{1+i}{2} & \frac{-1+i}{2} & 0 \\
    \frac{1+i}{2} & \frac{1-i}{2} & 0 \\
    0 & 0 & \zeta_3
    \end{pmatrix}$,
    $\begin{pmatrix} 
    \frac{1+i}{2} & \frac{-1-i}{2} & 0 \\
    \frac{-1+i}{2} & \frac{-1+i}{2} & 0 \\
    0 & 0 & \zeta_3^2
    \end{pmatrix}$} \\
    \hyperref[subsec:IV]{IV} & $S_4$ & \texttt{[24,12]} & $x^4+y^4+z^4+a(x^2y^2+y^2z^2+z^2x^2)$ & {\genfontsize $\begin{pmatrix} 0 & 0 & 1 \\ 1 & 0 & 0 \\ 0 & 1 & 0 \end{pmatrix}$, $\begin{pmatrix} 0 & -1 & 0 \\ 1 & 0 & 0 \\ 0 & 0 & 1 \end{pmatrix}$} \\
    \hyperref[subsec:V]{V} & $P$ & \texttt{[16,13]} & $x^4 + y^4 + z^4 + ax^2y^2$ & {\genfontsize $ \begin{pmatrix} -1 & 0 & 0 \\ 0 & 1 & 0 \\ 0 & 0 & 1 \end{pmatrix}$ ,
    $\begin{pmatrix} i & 0 & 0 \\ 0 & -i & 0 \\ 0 & 0 & 1 \end{pmatrix}$,
    $\begin{pmatrix} 0 & -1 & 0 \\ 1 & 0 & 0 \\ 0 & 0 & 1 \end{pmatrix}$} \\
    \hyperref[subsec:VI]{VI} & $C_9$ & \texttt{[9,1]} & $x^4 + xy^3 + yz^3$ & {\genfontsize $\begin{pmatrix} \zeta_3 & 0 & 0 \\ 0 & 1 & 0 \\ 0 & 0 & \zeta_9 \end{pmatrix}$} \\
    \hyperref[subsec:VII]{VII} & $D_8$ & \texttt{[8,3]} &  $x^4 +y^4 + z^4 + ax^2y^2 + bxyz^2$ & {\genfontsize $\begin{pmatrix} i & 0 & 0 \\
    0 & -i & 0 \\ 0 & 0 & 1\end{pmatrix}$,
    $\begin{pmatrix} 0 & 1 & 0 \\ 1 & 0 & 0 \\ 0 & 0 & 1 \end{pmatrix}$} \\
    \hyperref[subsec:VIII]{VIII} & $C_6$ & \texttt{[6,2]} & $z^3y +x^4 + ax^2y^2 +y^4$ & {\genfontsize $\begin{pmatrix} -1 & 0 & 0 \\ 0 & 1 & 0 \\ 0 & 0 & \zeta_3 \end{pmatrix} $}\\  
    \hyperref[subsec:IX]{IX} & $S_3$  & \texttt{[6,1]} & $x^3z + y^3z + x^2 y^2 + axyz^2+bz^4$ & {\genfontsize$ \begin{pmatrix} \zeta_3 & 0 & 0 \\ 0 & \zeta_3^2 & 0 \\ 0 & 0 & 1 \end{pmatrix}$, $\begin{pmatrix} 0 & 1 & 0 \\ 1& 0 & 0 \\ 0 & 0 & 1 \end{pmatrix}$}\\
    \hyperref[subsec:X]{X} & $K_4$ & \texttt{[4,2]} & $x^4 + y^4 + z^4 + ax^2 y^2 + by^2 z^2 + cx^2 z^2$ & {\genfontsize$\begin{pmatrix} -1 & 0 & 0 \\ 0 & 1 & 0 \\ 0 & 0 & 1 \end{pmatrix}$, $\begin{pmatrix} 1 & 0 & 0 \\ 0 & -1 & 0 \\ 0 & 0 & 1 \end{pmatrix}$}\\    
    \hyperref[subsec:XI]{XI} & $C_3$ & \texttt{[3,1]} & $z^3y + x(x-y)(x-ay)(x-by)$ 
    & {\genfontsize$\begin{pmatrix} 
    1 & 0 & 0 \\
    0 & 1 & 0 \\
    0 & 0 & \zeta_3
    \end{pmatrix}$} \\
    \hyperref[subsec:XII]{XII} & $C_2$ & \texttt{[2,1]} & $x^4 + y^4 + z^4 + x^2(ay^2 + byz + cz^2) + dy^2 z^2$ & {\genfontsize$ \begin{pmatrix} -1 & 0 & 0 \\ 0 & 1 & 0 \\ 0 & 0 & 1 \end{pmatrix}$ }\\
    \hline
    \end{tabular}
\end{center}
The equations are subject to the following constraints:
\begin{itemize}
    \item Type IV: we have $a \ne 0$ since this yields a Type II curve, and $a\ne \frac{3}{2}(-1 \pm \sqrt{-7})$, otherwise one obtains a Type I curve.
    \item Type V: $a \notin \left\{ 0,\ \pm 2\sqrt{-3},\ \pm 6 \right\}$. If $a=0$ or $a= \pm6$ we obtain a Type II quartic, and if $a = \pm 2\sqrt{-3}$ we obtain a Type III quartic.
    \item Type VII: $b\ne 0$. For $b=0$ we obtain a Type V quartic.
    \item Type VIII: $a\ne 0$. For $a=0$ we obtain a Type III quartic.
    \item Type IX:\footnote{%
    Our equation differs slightly from \cite{Dolgachev} by rescaling variables.
    }%
    $a\ne 0$, otherwise we obtain a Type IV quartic.
    \item Type X: all the values $\left\{ \pm a, \pm b, \pm c \right\}$ are distinct, otherwise we obtain a Type VII curve.
    \item Type XI: if either $a=\zeta_3$ or $b=\zeta_3$ we obtain a Type VI curve. If either $a=-1$ or $b=-1$ we obtain a Type VIII curve. 
    \item Type XII:\footnote{%
    Our equation for a Type XII quartic matches that found in \cite[Theorem~6.5.2]{Dolgachev}, which is a priori different from \cite[Lemma~6.5.1]{Dolgachev} and \cite{Henn,Bars}. A Type XII quartic is of the form $x^4 + x^2L_2(y,z) + L_4(y,z)$, where $L_k$ denotes a homogeneous binary form of degree $k$. By the classical theory of quantics, we recall that every binary quartic form admits a \textit{canonical form} $y^4 + dy^2 z^2 + z^4$ after a projective linear transformation, a fact that dates back to Cayley but can be found in \cite[Chapter~XII]{Elliott}. This is the source of the quartic equation we use here.%
    }
    If $b=0$ we obtain a Type X quartic. If $a=-2$, $b=0$, and $c=-d$, we obtain a Type IX curve.

\end{itemize}
\end{theorem}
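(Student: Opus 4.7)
The plan is to reduce the classification to a problem about finite subgroups of $\PGL_3(\C)$ and their invariant quartics. Since a smooth non-hyperelliptic genus three curve is canonically embedded as a quartic in $\P^2$, any automorphism of $C$ acts on $\Gamma(C,\Omega^1_C)\cong \C^3$, yielding an injection $\Aut(C) \hookrightarrow \PGL_3(\C)$; by Hurwitz this group is finite of order $\le 168$. Conversely, a finite $G \le \PGL_3(\C)$ occurs as $\Aut(C)$ for some smooth quartic $C$ precisely when the locus of smooth $G$-invariant quartics is nonempty, and the full automorphism group of a generic $G$-invariant quartic coincides with (the normalizer-truncated extension of) $G$ inside $\PGL_3(\C)$. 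So the task splits into three parts: (a) enumerate candidate finite subgroups of $\PGL_3(\C)$, (b) compute the $G$-invariants in $\Sym^4(\C^3)^\vee$, and (c) decide which $G$ arise as the full stabilizer of some smooth member of this linear system.

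For (a), I would apply the Blichfeldt--Mitchell classification of finite subgroups of $\PGL_3(\C)$ (equivalently, of $\SL_3(\C)$ up to center), which partitions such groups into intransitive, imprimitive, and primitive families, the primitive ones including the Hessian group of order 216, $\PSL_2(7)$, the Valentiner group, and their subgroups. The constraint that $G$ acts on a smooth genus three curve cuts this list down drastically. Riemann--Hurwitz applied to the quotient $C \to C/\langle g \rangle$ for each $g\in G$ forces $\#g \in \{1,2,3,4,6,7,9\}$; in particular the prime divisors of $\#G$ must lie in $\{2,3,7\}$, so the Hessian group and Valentiner group are immediately ruled out, leaving exactly the candidates appearing in the table.

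For (b), for each candidate $G$ I would lift to $\SL_3(\C)$ and compute the character of $G$ acting on $\Sym^4(\C^3)^\vee$ using the standard formula $\chi_{\Sym^4}(g) = \frac{1}{24}\bigl(\chi(g)^4 + 6\chi(g)^2\chi(g^2) + 3\chi(g^2)^2 + 8\chi(g)\chi(g^3) + 6\chi(g^4)\bigr)$, and then extract $\dim \Sym^4(\C^3)^{\vee,G}$ by averaging. The dimension of this invariant subspace (minus the projective freedom from the normalizer $N_{\PGL_3(\C)}(G)$) yields exactly the number of moduli parameters appearing in each row: zero for Types I, II, VI, three for Type X, etc. An explicit basis of invariants, obtained via Molien's formula or direct computation, produces the equations $F(x,y,z)$ in the table, up to the action of $N_{\PGL_3(\C)}(G)$ which is used to fix coordinates and kill redundant parameters. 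The generators listed can be read off either as the standard generators of the abstract group acting through a chosen three-dimensional representation, or verified directly by substitution into $F$.

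The hardest step is (c): distinguishing "generic" members of each stratum from special members whose automorphism group jumps. For every pair $G < G'$ in the table, one must identify the locus in the $G$-invariant quartic space where the symmetry enhances to $G'$, and show this locus is cut out by precisely the equations listed in the genericity conditions. Concretely, one intersects the parameter spaces via the explicit equations: e.g.\ for Type V the values $a=0,\pm 6,\pm 2\sqrt{-3}$ correspond to quartics that happen to also lie in the Type II or Type III invariant loci after a coordinate change in the normalizer. Verifying these coincidences requires matching invariant-ring calculations across different embeddings of $G$ into $\PGL_3(\C)$. Once the generic stratum is identified, upper semi-continuity of automorphisms together with the Blichfeldt--Mitchell enumeration shows no further jumps are possible, completing the classification. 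Smoothness of the generic member of each stratum follows by exhibiting even one smooth example, which the specific quartics in \autoref{section:computations} provide.
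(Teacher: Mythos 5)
First, note that the paper does not actually prove this statement: it is presented as a classical result attributed to Wiman, with the reader referred to Dolgachev for a rigorous proof, and the paper's own remark indicates that the standard argument proceeds bottom-up, by first putting \emph{cyclic} automorphisms into normal form and then running a case-by-case analysis to assemble the full groups. Your proposal instead goes top-down from the Blichfeldt--Mitchell classification of finite subgroups of $\PGL_3(\C)$, which is a legitimate alternative skeleton, but as written it contains two concrete errors that would derail the enumeration in step (a). The claim that Riemann--Hurwitz forces every element order to lie in $\{1,2,3,4,6,7,9\}$ is false: orders $8$ and $12$ occur, as the paper's own data shows --- the Type II group $C_4^{\times 2}\rtimes S_3$ contains a cyclic subgroup $C_8$ (witnessed by the orbit $[C_4^{\times 2}\rtimes S_3/C_8]$ of bitangents) and the Type III group contains a $C_{12}$. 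The correct list of cyclic orders for automorphisms of smooth plane quartics is $1,2,3,4,6,7,8,9,12$; with your truncated list you would wrongly exclude Types II and III. Second, the assertion that the prime-divisor constraint $\{2,3,7\}$ ``immediately rules out'' the Hessian group is wrong: that group has order $216=2^3\cdot 3^3$, so its prime divisors are only $2$ and $3$. It is excluded by the Hurwitz bound $84(g-1)=168<216$, and its primitive subgroups of orders $36$ and $72$ (which also pass both the prime test and the Hurwitz bound) must be excluded separately, e.g.\ by checking that they admit no smooth invariant quartic. This last check is genuinely needed and is missing from your outline.

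The remaining parts of your sketch are sound in outline. The character/Molien computation in (b) is the standard way to find the dimension of $\bigl(\Sym^4(\C^3)^\vee\bigr)^G$ and matches the parameter counts in the table, and your step (c) --- identifying the jump loci inside each invariant linear system, using the normalizer to normalize coordinates, and invoking semicontinuity of the automorphism group --- is essentially what the case-by-case analysis in the cited reference carries out. But (c) is where the real work lives (it is exactly the source of the genericity conditions on $a,b,c,d$ listed in the statement), and ``matching invariant-ring calculations across different embeddings of $G$'' is naming the problem rather than solving it: for each containment $G<G'$ one must exhibit the explicit conjugating element of $\PGL_3(\C)$ and verify the resulting polynomial identity, as the paper illustrates for the Type V $\to$ Type II degeneration at $a=\pm 6$. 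So the proposal is a reasonable roadmap with a genuinely different entry point from the cited proof, but it is not yet a proof, and steps (a) as stated would produce an incorrect candidate list.
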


The reader may notice subgroup containment between some of these groups --- we discuss how this manifests in our computations in \autoref{sec:subgroups}.

\begin{remark} The quartic equations appearing in \autoref{thm:aut-gps-quartic} are in fact the only quartic equations (up to projective transformation) with the given automorphism group. One proves this by first providing normal forms for cyclic actions, then doing a case-by-case analysis as in \cite[\S6.4]{Dolgachev}. When free parameters appear, they correspond to the dimension of the moduli of smooth planar quartics having the given automorphism group. In that sense, the free coefficients in the equations \autoref{thm:aut-gps-quartic} give an indication of how the moduli of symmetric quartics is stratified.
\end{remark}

\section{The Euler number for bitangents} \label{section:euler} 

The proof of Theorem \ref{thm:main} relies on the fact that we can obtain a $G$-equivariant count of the 28 bitangents to a smooth quartic $C$ for $G\cong\text{Aut}(C)$ using the equivariant Euler number of \cite{EEG}, and this count will be independent of choice of quartic with automorphism group $G$. We use the bundle parameterizing bitangents to smooth quartics following the construction of \cite{LarsonVogt}, which we recount exactly below. 

Let
\begin{align*}
    \mathcal{S} \to (\P^2)^\vee
\end{align*}
denote the tautological bundle over the moduli of lines in $\mathbb{P}^2$. Let $X:= \mathbb{P}(\Sym^2 \mathcal{S}^\vee)$ so that a point on $X$ is a pair $(L,Z)$ where $L \subseteq \P^2$ is a line and $Z \subseteq L$ is a degree two subscheme.  Writing $\pi\colon X\to \mathbb{P}^{2\vee}$ for the natural projection map, we have the commutative diagram:
\begin{equation} \label{diag:pullback_defining_X}
 \begin{tikzcd}
    \pi^\ast\Sym^2 \mathcal{S}^\vee\rar\dar & \Sym^2 \mathcal{S}^\vee\dar\\
    X\rar["\pi" below] & (\P^2)^\vee.
\end{tikzcd} 
\end{equation}
Note that any fiber of $\mathcal{O}_X(-1)\subseteq \pi^*\Sym^2\mathcal{S}^\vee$ determines a choice of equations defining $Z\subseteq L$, whence the quotient of
\begin{equation}
    \mathcal{O}_X(-1) \to \pi^\ast \Sym^2 \mathcal{S}^\vee
\end{equation}
parameterizes quadratic equations on $L$ vanishing along $Z$. 
Squaring, we obtain the composite
\begin{equation}\label{eqn:sym2-sym4}
\begin{aligned}
     \mathcal{O}_X(-2) \to \Sym^2 \mathcal{S}^\vee \otimes \Sym^2 \mathcal{S}^\vee \to  \Sym^4 \mathcal{S}^\vee.
\end{aligned}
\end{equation}
The vector bundle for our enumerative problem will be the quotient bundle $$E:=\Sym^4\mathcal{S}^\vee/\mathcal{O}_X(-2),$$ which will parameterize quartic polynomials on $L$ vanishing at the square of an equation of $Z$. In other words, the quotient $\Sym^4\mathcal{S}^\vee\to E$ over the fiber $(L,Z)$ in 
\begin{equation}\label{eq:SES_defining_E} 0\to \mathcal{O}_X(-2)\to \Sym^4\mathcal{S}^\vee\to E\to 0 
\end{equation}
realizes $E$ as the evaluation of a quartic polynomial on $L$  along $2Z$. 

\begin{theorem}\label{thm:euler-class} Let $G\le \PGL_3$ be contained in any group appearing as the automorphism group of a smooth non-hyperelliptic plane quartic. Then $$E\to X$$ constructed above is a $G$-equivariant vector bundle, and any planar quartic $C$ with $G\le \Aut(C)$ induces a $G$-equivariant section of this bundle whose zeros count the bitangents to $C$.
\end{theorem}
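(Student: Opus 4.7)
The plan is to proceed in three steps: establish the $G$-equivariant structure on $E$ by functoriality, produce the section $\sigma_F$ from the defining polynomial $F$ of $C$, and identify the zero locus of $\sigma_F$ with bitangents of $C$.

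For the first step, the embedding $G \hookrightarrow \PGL_3$ gives an action of $G$ on $\P^2$ and hence on $(\P^2)^\vee$. The tautological subbundle $\mathcal{S} \subset (\P^2)^\vee \times \C^3$ acquires a $G$-equivariant structure after choosing lifts of elements of $G$ to $\GL_3(\C)$; since $G$ is finite, any remaining scalar ambiguity can be absorbed by twisting. The subsequent operations appearing in the construction of $E$ — dualization, symmetric products, projectivization, pullback along the equivariant projection $\pi$, and the canonical squaring map $\Sym^2 \mathcal{S}^\vee \otimes \Sym^2 \mathcal{S}^\vee \to \Sym^4 \mathcal{S}^\vee$ — are all functorial and therefore preserve $G$-equivariance. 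In particular the inclusion $\mathcal{O}_X(-2) \hookrightarrow \pi^\ast \Sym^4 \mathcal{S}^\vee$ in \eqref{eq:SES_defining_E} is a morphism of $G$-equivariant bundles, so the quotient $E$ inherits a canonical $G$-equivariant structure.

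Next, restricting a quartic $F \in H^0(\P^2, \mathcal{O}_{\P^2}(4))$ to each line $L$ yields a section of $\Sym^4 \mathcal{S}^\vee$ over $(\P^2)^\vee$; pulling back along $\pi$ and composing with the quotient onto $E$ produces the section $\sigma_F \in H^0(X, E)$. For equivariance, note that since $G \le \Aut(C)$ stabilizes $V(F)$, every $g \in G$ sends $F$ to $\chi(g) F$ for some character $\chi : G \to \C^\ast$. Because the $G$-equivariant structures on $\mathcal{O}_{\P^2}(4)$ form a torsor over $\Hom(G, \C^\ast)$, we may twist the chosen equivariant structure by $\chi^{-1}$ so that $F$ becomes a genuine $G$-invariant section. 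Propagating this twist through the functorial construction of $E$ promotes $\sigma_F$ to a $G$-equivariant section.

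For the final step, a point $(L,Z) \in X$ lies in the vanishing locus of $\sigma_F$ if and only if $F|_L \in \Sym^4 \mathcal{S}^\vee|_{[L]}$ lies in the line $\mathcal{O}_X(-2)|_{(L,Z)}$. By construction of the squaring composite \eqref{eqn:sym2-sym4}, this line is precisely the span of $q_Z^2$, where $q_Z \in \Sym^2 \mathcal{S}^\vee|_{[L]}$ is a quadratic form on $L$ cutting out the degree two subscheme $Z$. Therefore $\sigma_F(L,Z) = 0$ iff $F|_L = c \cdot q_Z^2$ for some scalar, iff $L \cap C = 2Z$ as a divisor on $L$, iff $L$ is a bitangent to $C$ with tangency scheme $Z$. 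The main obstacle is the equivariance of $\sigma_F$: while $G$-invariance of the zero locus is immediate, upgrading the section itself to be genuinely $G$-equivariant (as is required to apply the $\MU_G$-valued Euler class machinery of \cite{EEG}) requires careful bookkeeping of the scalar ambiguity in lifting $G$ from $\PGL_3$ to $\GL_3$ and the corresponding choices of $G$-equivariant structures on the auxiliary line bundles. The remaining claims follow formally by unwinding the construction of $E$.
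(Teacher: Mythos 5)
Your proof follows essentially the same route as the paper's: both establish equivariance of the whole tower $\mathcal{S}$, $\Sym^2\mathcal{S}^\vee$, $X$, $\mathcal{O}_X(-2)\hookrightarrow \Sym^4\mathcal{S}^\vee$ by naturality of the constructions and then pass to the quotient $E$, the only technical divergence being that the paper splits the defining short exact sequence equivariantly via a $G$-invariant Hermitian metric on $\Sym^4\mathcal{S}^\vee$, whereas you take the quotient directly (which also suffices). Your additional bookkeeping --- lifting $G$ from $\PGL_3$ to $\GL_3$ and absorbing both the scalar ambiguity and the character by which $g$ rescales $F$ into twists --- addresses genuine subtleties that the paper's proof elides, so it is a refinement of the same argument rather than a different one.
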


\begin{proof}
The $G$-action on $\mathbb{P}^2$ defines an action on $\mathbb{P}^{2\vee}$ and thus on $\mathcal{S}\to(\mathbb{P}^2)^\vee$. The symmetric power $\Sym^2\mathcal{S}^\vee\to (\mathbb{P}^{2})^{\vee}$ is thus a $G$-vector bundle, and the projectivization $X:=\mathbb{P}(\Sym^2\mathcal{S}^\vee) \stackrel{\pi}{\to}(\mathbb{P}^{2})^{\vee}$ is as well. As the actions on $X$, $\Sym^2\mathcal{S}^\vee$, and $\pi^*\Sym^2\mathcal{S}^\vee$ are all inherited from that on $(\mathbb{P}^2)^\vee$, diagram \eqref{diag:pullback_defining_X} commutes equivariantly. 

Since the $G$-action on $\mathcal{O}_X(-1)$ is inherited from that of $\pi^*\Sym^2\mathcal{S}^\vee$, the inclusion $\mathcal{O}_X(-1) \to \pi^\ast \Sym^2 \mathcal{S}^\vee$ is $G$-equivariant. Squaring, we deduce that\[
\mathcal{O}_X(-2)\hookrightarrow(\pi^*\Sym^2\mathcal{S}^\vee)^{\otimes 2}
\]
is $G$-equivariant as well. Since over any particular $(L,Z)$ in $X$, the composition 
$$\mathcal{O}_X(-2)_{(L,Z)}\hookrightarrow((\Sym^2\mathcal{S}^\vee)^{\otimes 2})_{(L,Z)} \to (\Sym^4\mathcal{S}^\vee)_{(L,Z)}$$ 
is just the composition of inclusions of the space spanned by the square of a quadratic on $Z$, $\langle q_Z^2\rangle\mapsto \langle q_Z^2\rangle\mapsto \langle q_Z^2\rangle$, into the space $(\Sym^4\mathcal{S}^\vee)_{(L,Z)}$ of all degree 4 polynomials on $L$, 
$$\mathcal{O}_X(-2)\to \Sym^4\mathcal{S}^\vee$$ 
is $G$-equivariant. In order to argue that the quotient bundle $E$ can be chosen equivariantly, we pick a $G$-invariant Hermitian metric in $\Sym^4 \mathcal{S}^\vee$ in order to induce an equivariant splitting of the short exact sequence of bundles
\begin{align*}
    \mathcal{O}_X(-2) \to \Sym^4 \mathcal{S}^\vee \to E.
\end{align*}
Therefore $E \to X$ is a $G$-equivariant vector bundle, and $G$-equivariant planar quartics induce equivariant sections of the bundle.
\end{proof}

We state this conclusion again explicitly: 

\begin{corollary}\label{cor:bitangents_give_euler_class} Let $G$ be as in \autoref{thm:aut-gps-quartic}. Then the bitangents of any smooth planar quartic which is $G$-symmetric  have $G$-orbits given by the Euler class $n_G(E)$.
\end{corollary}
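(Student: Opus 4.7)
The plan is to apply the equivariant Euler number formalism developed in \cite{EEG} to the equivariant vector bundle $E\to X$ of \autoref{thm:euler-class}. The first step is to verify the dimension condition: since $(\mathbb{P}^2)^\vee$ is two-dimensional and $X=\mathbb{P}(\Sym^2\mathcal{S}^\vee)$ is a $\mathbb{P}^2$-bundle over it, $X$ has dimension four, while the short exact sequence \eqref{eq:SES_defining_E} exhibits $E$ as a rank-four quotient of the rank-five bundle $\Sym^4\mathcal{S}^\vee$. Thus the rank of $E$ agrees with the dimension of $X$, so a transverse section has isolated zeros and the equivariant Euler number is defined.

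Next I would unpack the meaning of the section $s_C$ of $E$ determined by a smooth $G$-symmetric quartic $C$: over a point $(L,Z)\in X$, the section records the class of the restricted quartic $C|_L$ modulo the one-dimensional subspace $\langle q_Z^2\rangle \subseteq (\Sym^4\mathcal{S}^\vee)_{(L,Z)}$. Hence $s_C(L,Z)=0$ precisely when $C|_L$ is a scalar multiple of $q_Z^2$, which is exactly the condition that $L$ be bitangent to $C$ with tangency subscheme $2Z$. Since the classical theorem on the twenty-eight bitangents guarantees that a smooth quartic has exactly twenty-eight such pairs $(L,Z)$, each counted with multiplicity one, the zeros of $s_C$ are isolated and simple, hence non-degenerate in the sense required by \cite{EEG}.

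Finally, by the key property of the equivariant Euler number $n_G(E)$, any two transverse $G$-equivariant sections of the same equivariant bundle produce the same virtual $G$-set of zero orbits, which is precisely what $n_G(E)$ records. Applying this to $s_C$ realises the bitangents, together with their $G$-action, as the zero $G$-set computed by $n_G(E)$, and the output is manifestly independent of the particular smooth $G$-symmetric quartic $C$. The principal conceptual work --- establishing the $G$-equivariance of $E$ and of $s_C$ --- was already carried out in \autoref{thm:euler-class}, so what remains here is merely invoking the deformation-invariance of the equivariant Euler number of \cite{EEG}; the only non-trivial input is the transversality of $s_C$, furnished by the classical bitangent count.
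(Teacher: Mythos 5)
Your argument is correct and takes essentially the same route as the paper: it rests on the equivariance of $E$ and of the section $\sigma_f$ established in \autoref{thm:euler-class}, the identification of the zero locus of that section with the $28$ bitangent pairs $(L,Z)$, and the conservation-of-number property of the equivariant Euler class from \cite{EEG}. The only material you add is the explicit check that $\rank E = \dim X = 4$ and the remark that simplicity of the zeros follows from the classical count of $28$ distinct bitangents on a smooth quartic, both of which the paper leaves implicit.
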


If $f$ is a quartic polynomial defining a $G$-symmetric, smooth, non-hyperelliptic plane quartic $C$ and $\sigma_f$ is the section $X\to E$ determined by $C$, then $\sigma_f$ vanishing at $(L,Z)$ exactly means that $L$ is a bitangent of $C$ with points of bitangency at $Z\subset L$. Thus the fact that $\sigma_f$ is a $G$-equivariant section and $C$ has 28 bitangents exactly means that $Z(\sigma_f)=\{(L,Z)\colon \sigma_f(L,Z)=0\}$ is a size 28 $G$-set, and $G$ acts transitively on the disjoint union of orbits of bitangents and points of bitangency of $C$. This count of bitangents is independent of symmetric quartic $C$ with $\Aut(C)\cong G$ since the Euler number is independent of choice of section:

\begin{corollary}\label{cor:independence} Let $G$ be any group which appears as the automorphism group of a genus three non-hyperelliptic curve. Then for any such curve $C$ with $G \cong\Aut(C)$, the action of $G$ on the bitangents of $C$ is independent of the choice of $C$.
\end{corollary}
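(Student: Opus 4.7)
The plan is to leverage \autoref{cor:bitangents_give_euler_class}, which has already identified the $G$-orbits of bitangents on a smooth $G$-symmetric quartic with the equivariant Euler class $n_G(E)$ of the bundle $E \to X$ constructed in \autoref{thm:euler-class}. Since this Euler class of \cite{EEG} is by construction an invariant of the equivariant bundle alone---independent of the particular non-degenerate equivariant section used to exhibit it---once we know that $E \to X$ together with its $G$-action does not depend on $C$, the conclusion follows immediately.

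First I would observe that the construction of $E \to X$ in \autoref{thm:euler-class} takes as input only the linear $G$-action on $\P^2$: it uses the induced actions on $(\P^2)^\vee$, on the tautological bundle $\mathcal{S}$, on $\Sym^2 \mathcal{S}^\vee$, on its projectivization $X$, and on the short exact sequence defining $E$. No quartic polynomial enters the construction of the bundle itself, so for a fixed linear $G$-action on $\P^2$, both $E \to X$ and its equivariant structure are completely determined; a choice of defining quartic $f$ enters only through the section $\sigma_f$.

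Second, I would invoke the fact from \cite{EEG} that the equivariant Euler class $n_G(E)$ depends only on the $G$-equivariant bundle $E \to X$ and not on the section, and can be read off from the $G$-set of zeros of any non-degenerate equivariant section. The section $\sigma_f$ is non-degenerate because $C$ is smooth, which classically forces the 28 bitangents to be distinct, so $Z(\sigma_f)$ is a reduced length-$28$ subscheme of $X$ on which $G$ acts. Combined with \autoref{cor:bitangents_give_euler_class}, this shows that the $G$-set of bitangents of $C$ is identified with $n_G(E)$, which does not depend on $C$.

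The main subtlety I expect is that two non-isomorphic smooth quartics $C, C'$ with $\Aut(C) \cong \Aut(C') \cong G$ as abstract groups may \emph{a priori} yield different embeddings $G \hookrightarrow \PGL_3(\C)$, inducing different $G$-actions on $\P^2$ and hence a priori different equivariant bundles $E$. To dispose of this, one should verify that for each type listed in \autoref{thm:aut-gps-quartic} the embedding $\Aut(C) \hookrightarrow \PGL_3(\C)$ is unique up to conjugation in $\PGL_3(\C)$. This is built into the normal-form analysis underlying \autoref{thm:aut-gps-quartic} (see \cite[\S6.5]{Dolgachev}): every $G$-symmetric quartic of a given type can be brought by a projective linear change of coordinates into the canonical family recorded in the table, whose automorphism group acts through the specific listed generators. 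Under this conjugation, the bundle $E \to X$ is carried to itself $G$-equivariantly, so the Euler class --- and hence the $G$-set of bitangents --- is genuinely an invariant of the type.
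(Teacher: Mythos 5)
Your proof is correct and follows essentially the same route as the paper: the paper's own proof is a one-line appeal to equivariant conservation of number, citing \cite[Theorem~5.24]{EEG} to identify the $G$-set of bitangents with the section-independent Euler class $n_G(E)$ pulled back from the Burnside ring, exactly as you argue. Your additional observation that the embedding $G \hookrightarrow \PGL_3(\C)$ must be unique up to conjugacy for the statement to be well-posed is a genuine subtlety the paper leaves implicit in \autoref{thm:aut-gps-quartic}, and you resolve it the right way.
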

\begin{proof} This is a direct consequence of equivariant conservation of number, pulling back an Euler class for equivariant homotopical bordism to the Burnside ring of the group \cite[Theorem~5.24]{EEG}.
\end{proof}

Combining Theorem \ref{thm:euler-class} and Corollaries \ref{cor:bitangents_give_euler_class} and \ref{cor:independence} proves \autoref{thm:main}. Formulas in $A(G)$ for the 28 bitangents of $G$-symmetric quartics are given in the table included with the statement of the main theorem in the introduction (see Theorem \ref{thm:main}), which we omit here for brevity. 

\begin{remark}\label{rmk:restriction}
It is worth noting that for any subgroup $H$ of $G$ 
we have a restriction map $\Res^G_H\colon A(G)\to A(H)$ that sends the class of any $G$-set to the $H$-set with $H$-action induced from the action of $G$. In particular, any $G$-symmetric quartic $C$ is $H$-symmetric, and a $G$-action on the bitangents of $C$ descends to an $H$-action on the bitangents of $C$ via restriction. We explore consequences of this for equivariant counts of bitangents in \autoref{sec:subgroups}.
\end{remark}

\section{Symmetric quartics and their bitangents}\label{section:computations}

In this section we compute the action of the automorphism group of a symmetric quartic on its 28 bitangents for all 12 types. For $H \le G$ a subgroup, we denote by $[G/H]$ the isomorphism class of the $G$-set of left cosets of $H$ in $G$. Every genuine, finite $G$-set decomposes as such a sum.

\subsection{Type I: Klein quartics}\label{subsec:Klein}
The most well-studied symmetric quartic, which boasts its own book \cite{Levy}, and its own sculpture at SLMath (formerly MSRI)\footnote{The sculpture is called \textit{The Eightfold Way}, created by Helaman Ferguson and installed in 1993. Thurston wrote a paper about the Klein quartic in honor of the statue's inauguration \cite{Thurston}.}, is the so-called \textit{Klein quartic}:
\begin{align*}
    x^3 y + y^3 z + z^3 x.
\end{align*}
Its automorphism group $G_{168}$ is the maximum possible by the Hurwitz bound, making it a prototypical example of what is called a \textit{Hurwitz curve}. Its automorphism group is isomorphic to the projective special linear group over the finite field with seven elements, which acts on $\P^2$ in the following way:
\begin{align*}
    \PSL_2(7) \cong \left\langle \begin{pmatrix} 0 & 0 & 1 \\ 1 & 0 & 0 \\ 0 & 1 & 0 \end{pmatrix}, \begin{pmatrix} -i & 0 & 0 \\ 0 & 0 & 1 \\ 0 & i & 0 \end{pmatrix} \right\rangle\le \PGL_3(\C). 
\end{align*}
We give a partial indication as to how to visualize this group. The Klein quartic, considered as a Riemann surface, can be tiled with 24 heptagons, and in fact it is a quotient of the hyperbolic plane via the congruence subgoup $\Gamma(7) \nsubgp \PSL(2,\Z)$, whose quotient $\PSL_2(7)$ acts via deck transformations and becomes the automorphism group of the Klein quartic. This group can be understood as the automorphisms attached to the tiling --- a heptagon can be sent to any other of the 24 heptagons, and has a sevenfold rotation, for a grand total of $24 \times 7 = 168 = \#\PSL_2(7)$ symmetries.\footnote{Reflections of the heptagons are not considered since they don't preserve the complex structure.}

Alternatively via the automorphism $\PSL_2(7) \cong \PSL_3(2)$, the automorphism group of the Klein quartic can be visualized as the symmetries of the \textit{Fano plane} as in \autoref{fig:Fano-plane}.

\begin{figure}[h]
  \includegraphics[width=0.2\linewidth]{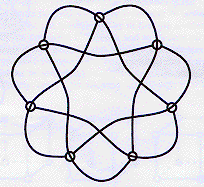}
  \centering
  \caption{An artistic rendering of the sevenfold symmetries on the Fano plane, drawn by Burkard Polster, found on John Baez' blog \cite{Baez}.}
  \label{fig:Fano-plane}
\end{figure}

The automorphism group acts on bitangents, so we recover our first computation, which was undoubtedly known to Klein.

\begin{computation}\label{comp:I} The automorphism group of the Klein quartic acts transitively on the 28 bitangents, with isotropy given by the symmetric group $S_3 \le \PSL_2(7)$.
\end{computation}

The Klein has exactly four real bitangents, which we visualize in \autoref{fig:Klein}.

\begin{figure}[h]
  \includegraphics[width=\quarticpicwidth]{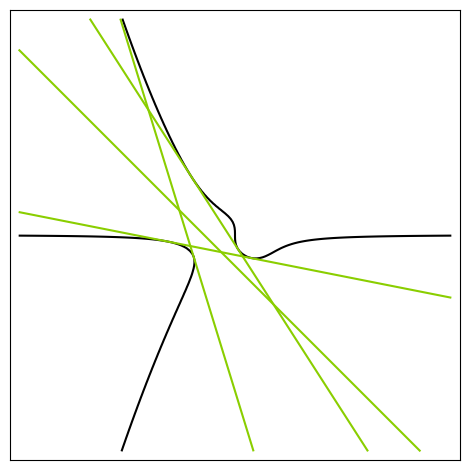}
  \centering
  \caption{The four real bitangents graphed on the Klein quartic. All lines lie in the same orbit.}
  \label{fig:Klein}
\end{figure}

\subsection{Type II: the Dyck quartic}\label{subsec:Dyck}
The quartic of type two has equation
\begin{align*}
    x^4 + y^4 + z^4,
\end{align*}
and was first studied in an 1880 paper by W. Dyck \cite{Dyck}, who ascertained that it admits 96 symmetries. The automorphism group has isomorphism type $G_{96}\cong C_4^{\times 2 }\rtimes S_3$, acting on the projective plane in the following way:
\begin{equation}\label{eqn:type-II-grp}
\begin{aligned}
     C_4^{\times 2} \rtimes S_3 \cong \left\langle \begin{pmatrix} 0 & 0 & 1 \\ 1 & 0 & 0 \\ 0 & 1 & 0 \end{pmatrix}, \begin{pmatrix} -i & 0 & 0 \\ 0 & 0 & 1 \\ 0 & i & 0 \end{pmatrix} \right\rangle\le \PGL_3(\C).
\end{aligned}
\end{equation}
\begin{computation}\label{comp:II} Any Type II quartic has bitangents with orbits
\begin{align*}
    [C_4^{\times 2} \rtimes S_3/C_6] + [C_4^{\times 2} \rtimes S_3/C_8].
\end{align*}
\end{computation}
While the Dyck quartic has no real points, it does admit real bitangents, where the points of tangency are complex. These are given by
\begin{align*}
&x - y - z \\
&x + y - z \\ 
&x - y + z \\
&x + y + z, 
\end{align*}
and they all lie in the $[C_4^{\times 2} \rtimes S_3/C_6]$ orbit.

\begin{remark}\label{rmk:edge}
Edge wrote an entire paper about this quartic in 1938 \cite{Edge}, in which he investigates its determinantal forms. Explicit equations for each of the 28 bitangents is included in \cite[p.~14]{Edge}, though he attributes this computation to Cayley. In \cite[\S14]{Edge}, Edge extends his computations of bitangents and Cayley octads for the Dyck quartic to a more general family of quartics, mainly those of the form
\begin{align*}
    (1+\gamma^2)^2 (x^4 + y^4 + z^4) - 2 (1+\gamma^4) (y^2 z^2 + x^2 y^2 + x^2 z^2)  = 0.
\end{align*}
When $\gamma = \zeta_8$ is a primitive $8$th root of unity, we obtain the Dyck quartic. For other values of $\gamma$, quartics of this shape are called \textit{Edge quartics}. The Edge quartic with $\gamma = \pm\sqrt{5}$, from \autoref{exa:edge-D8}, was studied in \cite[p.~713]{PSV}; we note via our classification the automorphism group is different --- this Edge quartic is a Type IV quartic with $a = - \frac{34}{25}$.
\end{remark}

\subsection{Type III: Order 48}\label{subsec:III}
Order 48, structure $C_4\circledcirc A_4$.

There is a unique Type III quartic given by the equation:
\begin{align*}
    x^4+y^4+z^4 + (4\zeta_3+2)x^2y^2,
\end{align*}
with automorphism group given by
\begin{align*}
    C_4 \circledcirc A_4 &= \left\langle
    \begin{pmatrix} 
    \frac{1+i}{2} & \frac{-1+i}{2} & 0 \\
    \frac{1+i}{2} & \frac{1-i}{2} & 0 \\
    0 & 0 & \zeta_3
    \end{pmatrix},
    \begin{pmatrix} 
    \frac{1+i}{2} & \frac{-1-i}{2} & 0 \\
    \frac{-1+i}{2} & \frac{-1+i}{2} & 0 \\
    0 & 0 & \zeta_3^2
    \end{pmatrix}\right\rangle \le \PGL_3(\C).
\end{align*}

A GAP computation shows that the 28 bitangents break into two orbits of sizes 24 and four. The first orbit has isotropy $C_2 \le C_4 \circledcirc A_4$, which is the non-normal cyclic subgroup of order two in $A_4$, while the second orbit has isotropy given by the unique subgroup of order 12, which is cyclic:

\begin{computation} The bitangents to a Type III quartic have orbits
\begin{align*}
    [C_4 \circledcirc A_4 / C_2] + [C_4 \circledcirc A_4 / C_{12}].
\end{align*}
\end{computation}
 
There are no real points, so we unfortunately cannot visualize this example.

\subsection{Type IV: The pencil of octahedral quartics}\label{subsec:IV}
Order 24, structure $S_4$

Up to change of coordinates, Type IV quartics are given by the following pencil, which we call the pencil of \textit{octahedral quartics}
\begin{align*}
    x^4 + y^4 + z^4 + a (x^2 y^2 + y^2 z^2 + z^2 x^2),
\end{align*}
for $a\ne \frac{3}{2}(-1\pm \sqrt{-7})$ (this would give the Klein) and $a\ne 0$ (this would give the Dyck quartic). As $a$ varies, we sweep out a pencil of quartics, first studied by Wiman and Ciani independently. A modern discussion of this pencil of quartics can be found in \cite[\S8]{Dolgachev-Kanev}. The paper \cite{Rodriguez-G-A} is dedicated solely to the study of this pencil, where the authors study (among other things) the Fuschian uniformizations and the Jacobians of these quartics.

A Type IV quartic has automorphism group isomorphic to the symmetric group $S_4$. For any such $a$ as above, we see the automorphism group is
\begin{align*}
    S_4 \cong \left\langle \begin{pmatrix} 0 & 0 & 1 \\ 1 & 0 & 0 \\ 0 & 1 & 0 \end{pmatrix}, \begin{pmatrix} 0 & -1 & 0 \\ 1 & 0 & 0 \\ 0 & 0 & 1 \end{pmatrix} \right\rangle \le \PGL_3(\C).
\end{align*}
\begin{computation}\label{comp:IV} The bitangents of a Type IV quartic have orbits
\begin{align*}
    [S_4/C_2^o] + [S_4/C_2^e] + [S_4/S_3].
\end{align*}
Here $C_2^o$ is an \textit{odd} copy of $C_2\le S_4$, i.e. it is a single transposition, while $C_2^e$ is a product of two disjoint transpositions, hence it is \textit{even}.
\end{computation}

\begin{example}
When $a=-3$, we compute there are 16 real bitangents, forming the orbits $[S_4/C_2^o]$ and $[S_4/S_3]$. We visualize this in \autoref{fig:IV}.
\begin{figure}[H]
  \includegraphics[width=\quarticpicwidth]{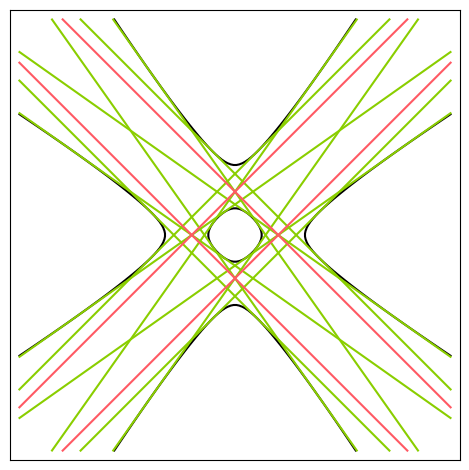}
  \centering
  \caption{The orbits of the real bitangents on the Type IV quartic with $a=-3$.
  Lines in \linecolorone~are in the $[S_4/C_2^o]$ orbit, and lines in \linecolortwo~are in the $[S_4/S_3]$ orbit.}
  \label{fig:IV}
\end{figure}
\end{example}

\subsection{Type V: Order 16}\label{subsec:V}
Order 16, structure $P = C_4 \circledcirc K_4$ the \textit{Pauli group}.

\begin{notation} The automorphism group of a type V surface is the \textit{Pauli group}, which we denote by $P$. In particular there are three non-central conjugacy classes of cyclic subgroups of order two, which we denote by $C_2^{(1)}$, $C_2^{(2)}$, and $C_2^{(3)}$, and the center is a cyclic group of order four which we denote by $C_4^Z$.
\end{notation}

Type V quartics are given by the equation
\begin{align*}
    x^4 + y^4 + z^4 + ax^2 y^2
\end{align*}
for $a\notin \left\{ 0,\ \pm 2\sqrt{-3},\ \pm 6 \right\}$. Their automorphism groups are of the form
\begin{align*}
    P \cong \left\langle \begin{pmatrix} -1 & 0 & 0 \\ 0 & 1 & 0 \\ 0 & 0 & 1 \end{pmatrix} ,
    \begin{pmatrix} i & 0 & 0 \\ 0 & -i & 0 \\ 0 & 0 & 1 \end{pmatrix},
    \begin{pmatrix} 0 & -1 & 0 \\ 1 & 0 & 0 \\ 0 & 0 & 1 \end{pmatrix}
    \right\rangle\le \PGL_3(\C).
\end{align*}

\begin{computation}\label{comp:V} Any quartic of Type V has bitangents with orbits
\begin{align*}
    [P/C_2^{(1)}] + [P/C_2^{(2)}] + [P/C_2^{(3)}] + [P/C_4^Z].
\end{align*}
\end{computation}

\begin{example} When $a=-4$ we see eight real bitangents, as in \autoref{fig:V}.
\begin{figure}[h]
  \includegraphics[width=\quarticpicwidth]{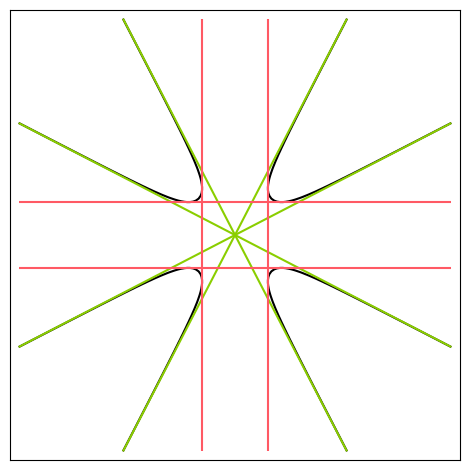}
  \centering
  \caption{The orbits of the real bitangents on a Type V quartic. The four in \linecolorone~form the the orbit whose isotropy is the central $C_4^Z$. The four lines in \linecolortwo~are part of an orbit with $C_2^{(1)}$ isotropy.}
  \label{fig:V}
\end{figure}
\end{example}

\subsection{Type VI: Order 9}\label{subsec:VI}
Order 9, structure $C_9$.

There is a unique Type VI quartic, given by the equation
\begin{align*}
    x^4 + xy^3 + yz^3,
\end{align*}
whose automorphism group is cyclic of order nine:
\begin{align*}
    C_9 \cong \left\langle \begin{pmatrix} \zeta_3 & 0 & 0 \\ 0 & 1 & 0 \\ 0 & 0 & \zeta_9 \end{pmatrix}  \right\rangle \le \PGL_3(\C).
\end{align*}
\begin{computation}\label{comp:VI} The Type VI quartic has bitangents with orbits
\begin{align*}
    3[C_9/e] + [C_9/C_9].
\end{align*}
\end{computation}
This quartic has four real lines, one in each orbit, in \autoref{fig:VI}.
\begin{figure}[h]
  \includegraphics[width=\quarticpicwidth]{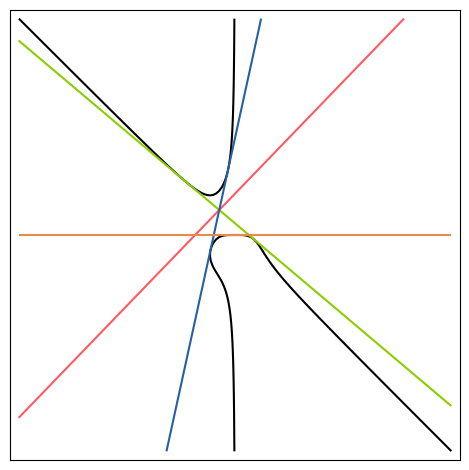}
  \centering
  \caption{The orbits of real bitangents on the quartic of Type VI. In \linecolorone~are lines in the first trivial isotropy orbit.
In \linecolortwo~are lines in the 2nd trivial isotropy orbit.
In \linecolorthree~are lines in the 3rd trivial isotropy orbit.
Finally in \linecolorfour~are lines in the $[C_9/C_9]$ orbit.}
  \label{fig:VI}
\end{figure}

\subsection{Type VII: Order 8}\label{subsec:VII}
Order 8, structure $D_8$.

Type VII quartics are given by the equation
\begin{align*}
    x^4 + y^4 + z^4 + ax^2 y^2 + bxyz^2
\end{align*}
for $b\ne 0$. The automorphism group is given by
\begin{align*}
    D_8 \cong \left\langle \begin{pmatrix} i & 0 & 0 \\
    0 & -i & 0 \\ 0 & 0 & 1\end{pmatrix},
    \begin{pmatrix} 0 & 1 & 0 \\ 1 & 0 & 0 \\ 0 & 0 & 1 \end{pmatrix} 
    \right\rangle\le \PGL_3(\C).
\end{align*}
Writing the first generator as $r$ and the second as $s$, we have three conjugacy classes of subgroups of order two, namely
\begin{align*}
    C_2^{(1)} &:= \left\langle s \right\rangle \cong \left\langle r^2 s \right\rangle \\
    C_2^{(2)} &:= \left\langle rs \right\rangle \cong \left\langle r^3 s \right\rangle \\
    C_2^Z &:= \left\langle r^2 \right\rangle,
\end{align*}
where the superscript $Z$ denotes that it is central.

\begin{computation}\label{comp:D8}
The bitangents of any Type VII quartic have orbits
\begin{align*}
    [D_8/e] + 2[D_8/C_2^{(1)}] + 2[D_8/C_2^{(2)}] + [D_8/C_2^Z].
\end{align*}
\end{computation}

\begin{example} For $a=-3$ and $b=1$, we see eight real bitangents, pictured in \autoref{fig:VII}.
\begin{figure}[h]
  \includegraphics[width=\quarticpicwidth]{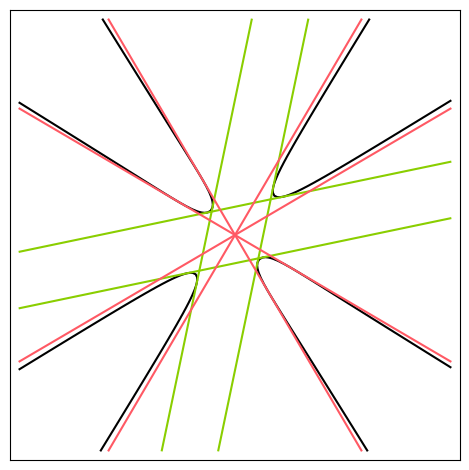}
  \centering
  \caption{The orbits of real bitangents on an example quartic of Type VII. The \linecolorone~lines have trivial isotropy, i.e. they live in the $[D_8/e]$ orbit, while the \linecolortwo~-lines form the $[D_8/C_2^Z]$ orbit.}
  \label{fig:VII}
\end{figure}
\end{example}

\subsection{Type VIII: Order 6}\label{subsec:VIII}
Order 6, structure $C_6$.

Type VIII quartics are given by
\begin{align*}
    x^4 + y^4 + ax^2 y^2 + yz^3
\end{align*}
for $a\ne 0$. Its automorphism group is given by
\begin{align*}
    C_6 \cong \left\langle \begin{pmatrix} -1 & 0 & 0 \\
    0 & 1 & 0 \\
    0 & 0 & \zeta_3\end{pmatrix}\right\rangle\le \PGL_3(\C).
\end{align*}
\begin{computation}\label{comp:VIII} The bitangents of any Type VIII quartic have orbits
\begin{align*}
    4[C_6/e] + [C_6/C_2] + [C_6/C_6].
\end{align*}
\end{computation}

\begin{example} When $a=-3$, we see four real bitangents, two occurring in distinct orbits with trivial isotropy, one in the orbit with $C_3$ isotropy, and one fixed under the action of the automorphism group, pictured in \autoref{fig:VIII}.
\begin{figure}[h]
  \includegraphics[width=\quarticpicwidth]{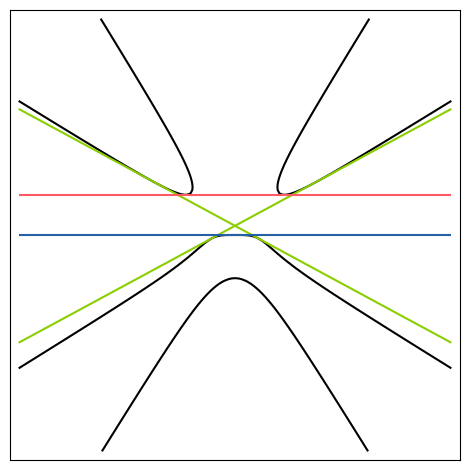}
  \centering
  \caption{The orbits of real bitangents on an example quartic of Type VIII. In \linecolorone~are lines in the $[C_6/e]$ orbit.
In \linecolortwo~are lines in the $[C_6/C_2]$ orbit.
In \linecolorthree~are lines in the $[C_6/C_6]$ orbit.}
  \label{fig:VIII}
\end{figure}
\end{example}

\subsection{Type IX: Order 6}\label{subsec:IX}
Order 6, structure $S_3$

Type IX quartics are given by the equation
\begin{align*}
    x^3z + y^3z + x^2 y^2 + axyz^2 + bz^4
\end{align*}
for $a\ne 0$. 
Its automorphism group is given by
\begin{align*}
    S_3 \cong \left\langle \begin{pmatrix} \zeta_3 & 0 & 0\\
    0 & \zeta_3^2 & 0 \\
    0 & 0 & 1\end{pmatrix} , \begin{pmatrix} 0 & 1 & 0 \\
    1 & 0 & 0 \\
    0 & 0 & 1\end{pmatrix} \right\rangle\le \PGL_3(\C).
\end{align*}
\begin{computation}\label{comp:IX} The bitangents on any Type IX quartic are given by
\begin{align*}
     3[S_3/e] + 3[S_3/C_2] + [S_3/S_3].
\end{align*}
\end{computation}

\begin{example} When $a=-25$ and $b=10$, we compute eight real bitangents, pictured in \autoref{fig:IX}.
\begin{figure}[h]
  \includegraphics[width=\quarticpicwidth]{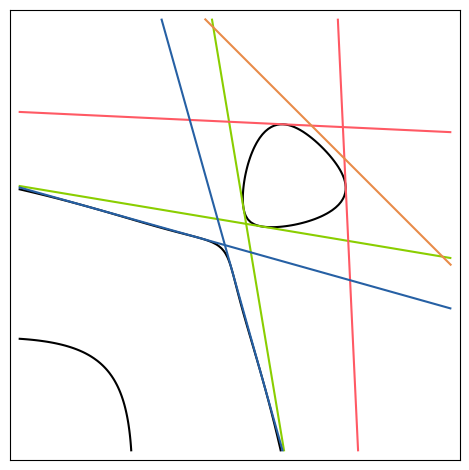}
  \centering
  \caption{The orbits of real bitangents on an example quartic of Type IX. In \linecolorone,\linecolortwo,~and \linecolorthree~are lines in different $[S_3/e]$ orbits. The line in \linecolorfour~lies in an $[S_3/C_2]$ orbit. Not pictured is a bitangent $z=0$ at infinity which forms the $[S_3/S_3]$ orbit.}
  \label{fig:IX}
\end{figure}
\end{example}

\subsection{Type X: Order 4}\label{subsec:X}
Order 4, structure $K_4$

Type X quartics are given by the equation
\begin{align*}
    x^4 + y^4 + z^4 + ax^2 y^2 + by^2 z^2 + cx^2 z^2
\end{align*}
for $a,b,c$ all distinct. Its automorphism group is given by
\begin{align*}
    K_4 \cong \left\langle
    \begin{pmatrix}
    -1 & 0 & 0 \\
    0 & 1 & 0 \\
    0 & 0 & 1
    \end{pmatrix},
    \begin{pmatrix}
    1 & 0 & 0 \\
    0 & -1 & 0\\
    0 & 0 & 1
    \end{pmatrix} 
    \right\rangle \le \PGL_3(\C).
\end{align*}
\begin{computation}\label{comp:X} The bitangents on any Type X quartic have orbits
\begin{align*}
    5[K_4/e] + 2[K_4/C_2^L] + 2[K_4/C_2^R].
\end{align*}
\end{computation}

\begin{example} When $(a,b,c) = (-9,-3,-8)$, we compute 16 real bitangents, forming four orbits with trivial isotropy, pictured in \autoref{fig:X}.
\begin{figure}[h]
  \includegraphics[width=\quarticpicwidth]{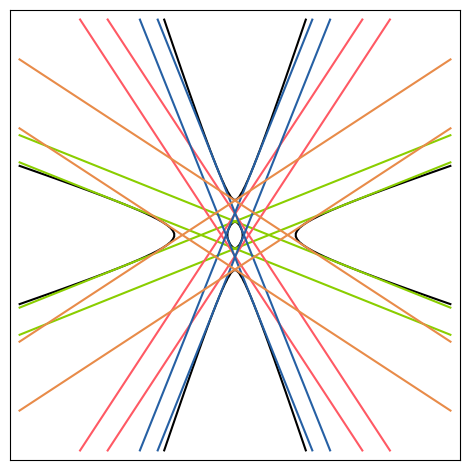}
  \centering
  \caption{The orbits of real bitangents on an example quartic of Type X. There are 16 bitangents, forming four orbits of the form $[K_4/e]$.}
  \label{fig:X}
\end{figure}
\end{example}

\subsection{Type XI: Order 3}\label{subsec:XI}
Order 3, structure $C_3$

Type XI quartics are given by the equation
\begin{align*}
    x(x-y)(x-ay)(x-by) + yz^3,
\end{align*}
for $a,b$ such that $a\neq 1$, $b\neq 1-a$, and $(x-a)(x-b)\neq x^2+x+1$. It has automorphism group
\begin{align*}
    C_3 \cong \left\langle \begin{pmatrix} 
    1 & 0 & 0 \\
    0 & 1 & 0 \\
    0 & 0 & \zeta_3
    \end{pmatrix}  \right\rangle\le \PGL_3(\C).
\end{align*}
\begin{computation}\label{comp:XI} The bitangents on any Type XI quartic have orbits
\begin{align*}
    9[C_3/e] + [C_3/C_3].
\end{align*}
\end{computation}

\begin{example} When $a=2$ and $b=3$, we compute four real bitangents, pictured in \autoref{fig:XI}.
\begin{figure}[h]
  \includegraphics[width=\quarticpicwidth]{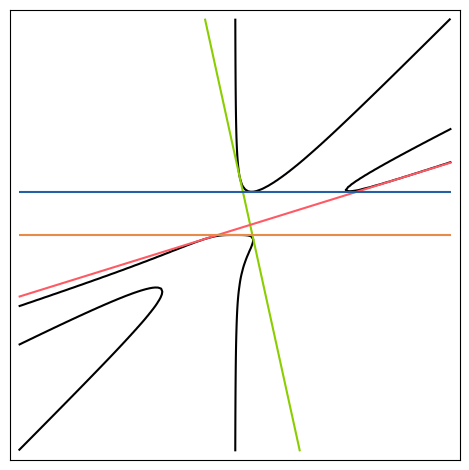}
  \centering
  \caption{The orbits of real bitangents on an example quartic of Type XI. Orbits with colors \linecolorone,~\linecolortwo,~and \linecolorthree~have trivial isotropy. The \linecolorfour~line is in a $[C_3/C_3]$ orbit, i.e. it is fixed by the automorphism group.}
  \label{fig:XI}
\end{figure}

\end{example}

\subsection{Type XII: Order 2}\label{subsec:XII}
Order 2, structure $C_2$.

All Type XII quartics are given by
\begin{align*}
    x^4 + y^4 + z^4 + x^2(ay^2 + byz + cz^2) + dy^2 z^2,
\end{align*}
for $b\ne 0$, $a\ne -2$, and $c\neq -d$. Note that if $b=0$ we obtain a Type X quartic, and if $a=-2$, $b=0$, and $c=-d$ we obtain a Type IX curve.

This has automorphism group
\begin{align*}
    C_2 \cong \left\langle 
    \begin{pmatrix} 
    -1 & 0 & 0 \\
    0 & 1 & 0 \\
    0 & 0 & 1\end{pmatrix} \right\rangle\le \PGL_3(\C).
\end{align*}
\begin{computation}\label{comp:XII} The bitangents on any Type XII quartic have orbits
\begin{align*}
    12[C_2/e] + 4[C_2/C_2].
\end{align*}
\end{computation}

\begin{example}
For the Type XII quartic defined by the equation
\begin{align*}
    x^4 + x^2 (-y^2 - 4z^2) + y^4 - y^2 z^2 - z^4,
\end{align*}
we have four real bitangents, forming three orbits, pictured in \autoref{fig:XII}.
\begin{figure}[h]
  \includegraphics[width=\quarticpicwidth]{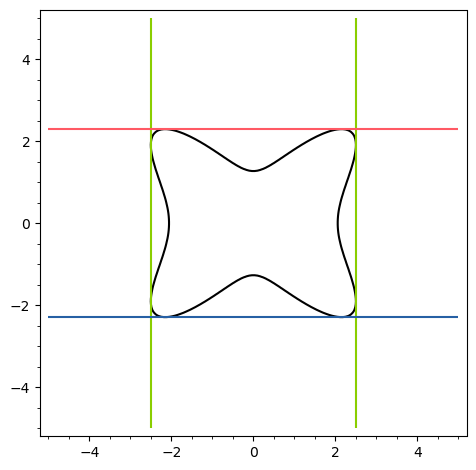}
  \centering
  \caption{The orbits of real bitangents on an example quartic of Type XII. Lines in \linecolorone~have trivial isotropy, while the lines in \linecolortwo~and \linecolorthree~are fixed by the automorphism group.}
  \label{fig:XII}
\end{figure}
\end{example}

\section{The subgroup lattice and restriction}\label{sec:subgroups}

As mentioned in \autoref{rmk:restriction}, we may restrict our attention to a subgroup $H \le G \cong \Aut(C)$ of automorphisms for a given curve, and the action of that group on the bitangents is the restriction of the $G$-set of bitangents to the subgroup $H$. As we have seen in \autoref{thm:aut-gps-quartic}, we have families of curves attached to certain values of $a$, $b$, etc., and when these take particular values the type of the quartic can change. Often how this occurs is that the entire family is invariant under the automorphism group, however for a special value of a coefficient, a new polynomial relation holds, introducing an extra automorphism to the equation and enlarging the automorphism group of the curve. To that end, it is important to understand which automorphism groups are subgroups of one another.

We therefore copy a table from \cite{Henn} demonstrating subgroup inclusion. For each entry we include both the type and automorphism group, and, where relevant, we label the inclusions with the special values from the example forms of quartics provided in \autoref{thm:aut-gps-quartic}.

\[ \begin{tikzcd}
    \gpdiagram{I}{\PGL_2(7)} &  & \gpdiagram{II}{C_4^{\times2}\rtimes S_3} &  & \gpdiagram{III}{C_4\circledcirc A_4} & \gpdiagram{VI}{C_9}\\
     & \gpdiagram{IV}{S_4}\ar[ul,"a = \frac{3}{2}(-1 \pm \sqrt{-7})"]\ar[ur,"a=0"] &  &\gpdiagram{V}{P}\ar[ul,"{a=0,\pm6}"]\ar[ur,"a=\pm2\sqrt{-3}"]  & \gpdiagram{VIII}{C_6}\uar["a=0"] & \\
    \gpdiagram{IX}{S_3}\ar[ur,"a=0"] &  & \gpdiagram{VII}{D_8}\ar[ul,dotted]\ar[ur,"b=0"] &  &  & \\
     &  &  \gpdiagram{X}{K_4}\uar["\tiny\substack{\{\pm a,\pm b,\pm c\} \\ \text{not all} \\ \text{distinct}}"] &  & \gpdiagram{XI}{C_3}\ar[uu,"\substack{a=-1\\ \text{or} \\ b=-1}"]\ar[uuur, "\tiny\substack{a=\zeta_3 \\ \text{or} \\b=\zeta_3}" below right]  & \\
     &  &  \gpdiagram{XII}{C_2}\uar["b=0"]\ar[uull,"\tiny\substack{a=-2 \\ b=0 \\ c=-d}"]\ar[uuurr,dotted] &  & & \\
     &  &  & e\ar[ul]\ar[uur] &  & \\
\end{tikzcd} \]
The groups as we've presented them may not be literally subgroups of one another, a projective change of basis may be needed. We explain the meaning behind the dashed arrows in \autoref{rmk:dashed-arrows}.

\begin{example} Some lovely examples of the phenomenon we're discussing can be found in \cite[pp.~269--270]{Dolgachev}; we include one such example here, where we show how to derive the special values $a= -6,0,6$ where the pencil of Type V quartics becomes a Dyck quartic. Recall the equation for a Type V quartic:
\begin{align*}
   x^4 + y^4 + z^4 + ax^2y^2.
\end{align*}
It is clear that by setting $a=0$ we obtain the Dyck quartic, but where do the other values $a = \pm6$ come from? We see that the Pauli group $P$ is not literally a subgroup of $C_4^{\times 2 }\rtimes S_3$ as presented, nevertheless $C_4^{\times 2 }\rtimes S_3$ contains a unique conjugacy class of subgroups isomorphic to $P$. This is reflected via a change of basis matrix (unique up to conjugacy in $C_4^{\times2} \rtimes S_3$ --- note there are three conjugacy classes of $P$ in $C_4^{\times2} \rtimes S_3$)
\begin{align*}
    g &=  \begin{pmatrix} 1 & 1 & 0 \\
    1 & -1 & 0 \\
    0 & 0 & 8^{1/4}\end{pmatrix}.
\end{align*}
Conjugating $P$ by this change of basis, we see that $gPg^{-1} \le C_4^{\times 2} \rtimes S_3$, and the equation for a Type V quartic changes to be
\begin{align*}
   (x+y)^4 + (x-y)^4 + 8^{1/4}z^4 + a(x+y)^2(x-y)^2 = (a+2)x^4 + (a+2)y^4 + 8z^4 + (12-2a)x^2 y^2.
\end{align*}
We can now ask for which values of $a$ does such a quartic satisfy the $C_4^{\times r}\rtimes S_3$ symmetries in \autoref{eqn:type-II-grp}. The first generator cycles the variables, for which we see we need $12-2a=0$ for the $x^2 y^2$ term to vanish, i.e. we need $a=6$. For this choice of $a$ we satisfy the full symmetry group $C_4^{\times2}\rtimes S_3$ and hence obtain the Type II quartic. The case $a=-6$ is similar, but using a conjugate change of basis.
\end{example}

\begin{remark} 
We remark this line of reasoning is basically equivalent to being aware of clever relations for working with homogeneous polynomials and their vanishing loci. Dolgachev remarks that the constraint above comes from knowing the identity
\begin{align*}
    x^4 + y^4 &= \frac{1}{8} \left( (x+y)^4 + (x-y)^4 + 6(x+y)^2(x-y)^2 \right).
\end{align*}
Leveraging identities such as these in projective geometry is characteristic of much of algebra in the latter half of the 19th century.
\end{remark}

What happens when this line of reasoning fails? Consider the following example:

\begin{proposition}\label{prop:C2-C6} There are no special values of $a,b,c,d$ for which a Type XII quartic becomes a Type VIII quartic.
\end{proposition}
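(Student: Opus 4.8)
I would argue by contradiction. Any Type XII curve $C$ carries the involution $\iota=\diag(-1,1,1)$, and if $C$ were also Type VIII then $\Aut(C)\cong C_6$ would contain $\iota$ as its unique element of order two; in particular $\iota$ is central, so every order-three automorphism $\tau$ commutes with it. The centralizer of $\iota$ in $\PGL_3$ consists of the block matrices $\diag(\lambda,A)$ with $A\in\GL_2$ acting on the $(y,z)$-plane, so writing $\tau=\diag(\lambda,A)$ and $f=x^4+x^2q_2(y,z)+q_4(y,z)$ with $q_2=ay^2+byz+cz^2$ and $q_4=y^4+dy^2z^2+z^4$, invariance of $f$ is equivalent to $q_2$ and $q_4$ each being semi-invariant under $A$. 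After rescaling so that the $x$-eigenvalue is $1$, the order-three condition becomes $A^3=I$ with $A\neq I$, and I would split into the two eigenvalue types of $A$.

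The case $A\sim\diag(\zeta_3,\zeta_3^2)$ is disposed of cleanly. In an eigenbasis $(u,v)$ of $A$ the only invariant quadric is $\propto uv$ and the only invariant quartic is $\propto u^2v^2$, so $q_2\propto uv$ and $q_4\propto(uv)^2\propto q_2^2$. The relation $q_4\propto q_2^2$ is coordinate-free, so comparing it with $q_4=y^4+dy^2z^2+z^4$ forces the $y^3z$ and $yz^3$ coefficients of $q_2^2$ to vanish, i.e. $2ab=2bc=0$; since $b\neq 0$ this gives $a=c=0$, whence $q_4\propto y^2z^2$, which is absurd. So this type cannot occur for a genuine Type XII curve.

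The remaining case, $A\sim\diag(1,\zeta_3)$, is the heart of the matter. Here $q_2$ must be a perfect square whose root is one of the four roots of $q_4$, and the other three roots of $q_4$ must form a single orbit of $A$. The latter says precisely that the four $\iota$-fixed points cut on $C$ by $\{x=0\}$, namely the roots of $q_4$, are equianharmonic, which for the canonical form pins down $d=\pm 2\sqrt{-3}$. I would then show that these constraints are incompatible with $C$ being a \emph{smooth} Type XII curve: $d=\pm2\sqrt{-3}$ is exactly the equianharmonic value attached to the higher-symmetry (Type III) locus, and the plan is to prove that imposing simultaneously that $q_2$ is a perfect square meeting $C$ along a root of $q_4$ either forces one of the excluded degenerations (so that $\Aut(C)$ is genuinely larger than $C_6$, hence $C$ is not Type VIII) or destroys smoothness. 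I would organize this around the hyperflex line $y=w_\ast z$ through the common root $w_\ast$ of $q_2$ and $q_4$: along it $q_2$ vanishes and $f$ restricts to $x^4$, so the line meets $C$ only at $[0:w_\ast:1]$ with multiplicity four, and the position of this line relative to the $A_4$-symmetry of the equianharmonic $q_4$ is what controls whether the putative $C_6$-action is consistent.

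This final step is where I expect the real difficulty to lie, and it is genuinely geometric rather than homotopy-theoretic. The Euler-class machinery used earlier is silent here: passing to $\iota$ only records the abstract containment $C_2\le C_6$, and in fact restricting the Type VIII $G$-set of bitangents $4[C_6/e]+[C_6/C_2]+[C_6/C_6]$ to $C_2$ reproduces the Type XII $G$-set $12[C_2/e]+4[C_2/C_2]$ exactly, so no contradiction is visible at the level of Burnside-ring classes. The obstruction must therefore be extracted from the equianharmonic configuration by hand, exhibiting the extra automorphisms (or the singularity) forced by the coincidence of $q_2$'s double root with a root of $q_4$, and concluding that no admissible $(a,b,c,d)$ with $b\neq 0$, $a\neq -2$, $c\neq -d$ can produce a Type VIII curve.
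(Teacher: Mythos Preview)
Your approach differs substantially from the paper's, and the divergence stems from how the statement is read. The paper interprets ``becomes a Type VIII quartic'' in the narrow sense dictated by the subgroup diagram: since the Type XII generator $\iota=\diag(-1,1,1)$ is literally the cube of the Type VIII generator $\diag(-1,1,\zeta_3)$ from the table (this is what ``$C_2$ is a subgroup as written'' means), the only thing to check is whether the Type XII polynomial is ever invariant under that specific order-six element. Applying it sends $z^4$ to $\zeta_3 z^4$, and that already rules out every choice of $a,b,c,d$. That single observation is the entire proof.

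You instead ask whether \emph{any} cyclic group of order six in $\PGL_3$ containing $\iota$ can preserve the Type XII form, and this drives your eigenvalue case split on $A$. Your case $A\sim\diag(\zeta_3,\zeta_3^2)$ is handled correctly. But the case $A\sim\diag(1,\zeta_3)$ cannot be completed along the lines you sketch, because the constraints you isolate---$d=\pm 2\sqrt{-3}$ together with $q_2$ a perfect square whose root divides $q_4$---genuinely do admit smooth solutions with $b\neq 0$. Indeed, take any Type VIII curve $x^4+a'x^2y^2+y^4+yz^3$ and apply a $\GL_2$ change in $(y,z)$ carrying the equianharmonic quartic $y^4+yz^3$ to its canonical form $Y^4+dY^2Z^2+Z^4$; the resulting polynomial has Type XII shape with $q_2$ a nonzero multiple of a square of a linear form not proportional to $Y$ or $Z$, hence $b\neq 0$, and it carries a $C_6$ by construction. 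So under your broader reading the assertion would actually fail; the paper's intended meaning is the narrower one, tied to the fixed embeddings of the automorphism groups in $\PGL_3$ coming from the table of normal forms, and under that reading the one-line check of the given generator suffices. Your side remark that $\Res^{C_6}_{C_2}$ of the Type VIII bitangent class $4[C_6/e]+[C_6/C_2]+[C_6/C_6]$ reproduces $12[C_2/e]+4[C_2/C_2]$ is correct and is in the spirit of the surrounding discussion of restricted Euler classes, but it plays no role in the proof itself.
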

\begin{proof} Since $C_6$ is abelian and $C_2$ is a subgroup as written, we just have to check whether any special values of $a,b,c,d$ exist for which a Type XII quartic can satisfy a sixfold symmetry. Applying the generator for $C_6$ to a Type XII quartic, we obtain
\begin{align*}
    x^4 + y^4 + \zeta_3z^4 + x^2(ay^2 + b\zeta_3yz + c\zeta_3^2z^2) + d\zeta_3^2 y^2 z^2,
\end{align*}
and there are no values of $a,b,c,d$ for which this is equal to the original equation.\footnote{Note the existence of Type III quartics does not contradict this line of reasoning --- the special values for passing from XII to X and then from X to VII are contradictory, so an intermediate change of basis is first needed, and new parameters need to be defined.}
\end{proof}

\begin{remark}\label{rmk:dashed-arrows} This is precisely the meaning of the dashed arrows in our subgroup diagram. In the language of the moduli of symmetric quartics, this means the Type VIII quartics don't lie on the boundary of the Type XII quartics, but rather they share a common boundary at the Type III quartics. The argument for the dashed arrow from Type VII to Type IV is omitted.
\end{remark}

\subsection{Leveraging Euler classes to explore restricted actions}

We've seen that the locus of $D_8$-symmetric quartics is not glued to the locus of $S_4$-symmetric quartics, so from the naive perspective of the moduli space, we shouldn't expect that there is much coherence between how these groups act on bitangents. Nevertheless we can see the following example:

\begin{example}\label{exa:edge-d8} 
Consider the Edge quartic (of Type IV) with its restricted $D_8\le S_4$-symmetry on the $xy$-plane. The $D_8$-orbits of the lines are graphed in \autoref{fig:edge-d8}
\begin{figure}[h]
  \includegraphics[width=0.4\linewidth]{Edge_no_lines.png}
   \includegraphics[width=0.4\linewidth]{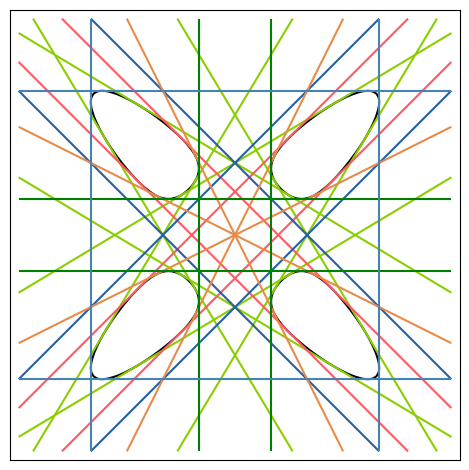}
  \centering
  \caption{The \textit{Edge quartic}, with $D_8$-orbits on its bitangents given by $[D_8/e] + 2[D_8/C_2^{(1)}] + 2[D_8/C_2^{(2)}] + [D_8/C_2^Z]$.}
  \label{fig:edge-d8}
\end{figure}
\end{example}

This is the same count as the $D_8$-action on the bitangents of a Type VII quartic (\autoref{comp:D8}), despite the fact that the Edge quartic cannot be obtained by deforming a family of quartics with automorphism group $D_8$. So the fact that we obtained the same answer above is perhaps not obvious from the moduli perspective. It is, however, obvious from the perspective of equivariant Euler classes!

\begin{proposition}\label{prop:conjugacy} 
Let $H \le G$ be a subgroup, and let $g \in G$ be arbitrary. Then we can consider restricting a $G$-set along the inclusion of $H$ in $G$, or along the group homomorphism $c_g \colon H \to G$ given by $h \mapsto ghg^{-1}$. These induce naturally isomorphic functors $\Fin_G \to \Fin_H$.
\end{proposition}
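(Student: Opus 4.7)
The plan is to construct the natural isomorphism explicitly. Given a $G$-set $X$, write $\Res^G_H X$ for the $H$-set with action $h \cdot x = hx$ (the usual restriction along the inclusion), and write $c_g^* X$ for the $H$-set with action $h \ast x = (ghg^{-1})x$ (restriction along $c_g$). I would define a map
\[
\phi_X \colon \Res^G_H X \longrightarrow c_g^* X, \qquad \phi_X(x) = gx,
\]
where $gx$ is computed using the original $G$-action on $X$.

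The key step is the equivariance check: for $h \in H$ and $x \in X$, one computes
\[
\phi_X(h \cdot x) = g(hx) = (ghg^{-1})(gx) = h \ast \phi_X(x),
\]
so $\phi_X$ is an $H$-equivariant map. It is a bijection with inverse $y \mapsto g^{-1}y$, hence an isomorphism of $H$-sets. Naturality in $X$ is automatic from the fact that every $G$-equivariant map $f\colon X \to Y$ satisfies $f(gx) = g f(x)$, so $\phi_Y \circ f = f \circ \phi_X$.

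There is no real obstacle here; the content is essentially the observation that conjugation is an inner operation in $G$, and the element $g$ itself witnesses the isomorphism. I would close by noting this is a special case of the more general fact that naturally isomorphic group homomorphisms $H \to G$ (i.e., those conjugate in $G$) induce naturally isomorphic restriction functors on $G$-sets, which in turn explains why the equivariant Euler class computations in the preceding sections are invariant under the various conjugacy-class ambiguities in choosing embeddings of subgroups such as $D_8 \le S_4$ discussed in \autoref{exa:edge-d8}.
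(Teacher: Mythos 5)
Your proof is correct and is essentially the paper's argument with the details written out: the paper simply notes that conjugation by $g$ gives a natural isomorphism on finite $G$-sets, and your map $\phi_X(x) = gx$ with the equivariance check $g(hx) = (ghg^{-1})(gx)$ is exactly that isomorphism made explicit.
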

\begin{proof} It suffices to observe that conjugacy by $g$ is a natural isomorphism on the category of finite $G$-sets.
\end{proof}

\begin{corollary} Let $E \to M$ be any $G$-equivariant enumerative problem satisfying the hypotheses in \cite[1.1]{EEG} (e.g. counting lines on symmetric cubic surfaces or counting bitangents to planar quartics). Then the restricted Euler class $n_H(E)$, obtained as the image of
\begin{align*}
    \pi_0^G \MU_G &\to \pi_0^H \MU_H \\
    n_G(E) &\mapsto n_H(E)
\end{align*}
depends only on the conjugacy class of $H$ in $G$.
\end{corollary}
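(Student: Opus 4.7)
The plan is to reduce the statement to \autoref{prop:conjugacy} via the naturality of the equivariant Euler class under group homomorphisms. Let $H, H' \le G$ be conjugate with $H' = gHg^{-1}$; the inner automorphism $c_g \colon H \to H'$, $h \mapsto ghg^{-1}$, induces a ring isomorphism $c_g^* \colon \pi_0^{H'}\MU_{H'} \xrightarrow{\sim} \pi_0^H\MU_H$ through which the two restrictions $n_H(E)$ and $n_{H'}(E)$ are to be identified. The goal is to show $c_g^*(n_{H'}(E)) = n_H(E)$.

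First, I would recall that restriction $\pi_0^G\MU_G \to \pi_0^H\MU_H$ is the $\pi_0$-level shadow of a change-of-groups functor on genuine $G$-spectra, which extends to arbitrary group homomorphisms $\varphi \colon H \to G$, producing pullback maps $\varphi^* \colon \pi_0^G\MU_G \to \pi_0^H\MU_H$. This assignment is itself functorial: naturally isomorphic homomorphisms induce equal maps on $\pi_0$, with the natural isomorphism realized by conjugation acting on the underlying $G$-equivariant data.

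Next, consider the two homomorphisms $H \to G$ given by the inclusion $\iota_H$ and by the composite $\iota_{H'} \circ c_g$ (which first conjugates $H$ to $H'$ and then includes $H'$ into $G$). By \autoref{prop:conjugacy}, these induce naturally isomorphic functors $\Fin_G \to \Fin_H$. Lifting this natural isomorphism to the level of $\MU_G$ yields the equality of ring maps $\iota_H^* = (\iota_{H'} \circ c_g)^* = c_g^* \circ \iota_{H'}^*$ on $\pi_0^G\MU_G$. Applying both sides to the universal Euler class $n_G(E)$ gives $n_H(E) = c_g^*(n_{H'}(E))$, as desired.

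The main subtlety, and the only nontrivial step, is rigorously upgrading the natural isomorphism of restriction functors from finite $G$-sets (which is the content of \autoref{prop:conjugacy}) to the homotopical bordism spectrum $\MU_G$. This should follow from the construction of $\MU_G$ as a genuine $G$-spectrum built functorially out of $G$-equivariant Grassmannians and their Thom spaces, where the change-of-groups machinery is manifestly natural in the group homomorphism. Equivalently, one can invoke the general principle that inner automorphisms act trivially on cohomology theories defined via classifying-space data, applied here fiberwise over the relevant moduli.
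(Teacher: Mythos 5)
Your argument is correct in outline but follows a genuinely different route from the paper. You work directly at the level of $\pi_0\MU_G$, showing that restriction along the inclusion $\iota_H$ and along $\iota_{H'}\circ c_g$ induce the same map after identifying $\pi_0^H\MU_H$ with $\pi_0^{H'}\MU_{H'}$ via conjugation. The step you flag as the ``main subtlety'' --- upgrading \autoref{prop:conjugacy} from finite $G$-sets to homotopical bordism --- is real, and your proposed justifications (the Grassmannian construction of $\MU_G$, ``inner automorphisms act trivially'') are pointing at the right phenomenon but are not the cleanest way to nail it down: the precise statement you need is that the homotopy groups of any genuine $G$-spectrum assemble into a Mackey functor, whose conjugation isomorphisms intertwine the restrictions to conjugate subgroups; nothing specific to $\MU_G$ is required. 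The paper avoids this step entirely: it invokes \cite[Theorem~5.24]{EEG}, which says the Euler class $n_G(E)$ lies in the \emph{injective} image of the Burnside ring $A(G)\to\pi_0^G\MU_G$, so the whole question is pulled back to $A(G)$, where it becomes exactly the group completion of the statement about finite $G$-sets proved in \autoref{prop:conjugacy}. Your approach is more general, in that it would apply to arbitrary classes in $\pi_0^G\MU_G$ rather than only those coming from the Burnside ring, at the cost of needing the Mackey-functor input; the paper's approach trades that generality for a purely combinatorial argument. To make your version airtight, replace the plausibility argument in your last paragraph with an appeal to the Mackey functor structure on $\underline{\pi}_0$ of a genuine $G$-spectrum.
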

\begin{proof} Equivariant Euler classes valued in homotopical bordism are in the injective image of the Burnside ring by \cite[5.24]{EEG}, hence it suffices to prove the statement there. This follows directly from group completing isomorphism classes of finite $G$-sets and applying \autoref{prop:conjugacy}.
\end{proof}

This explains what we observed in \autoref{exa:edge-d8}. Although there we cannot deform Type VII quartics into Type IV quartics, there are unique conjugacy classes of $D_8$ and $S_4$ in $\PGL_3$ occurring as the automorphism groups of quartics, and the copy of $D_8$ is subconjugate to $S_4$.

\begin{remark} $\ $
\begin{enumerate}
    \item This means that we can still restrict computations along the dashed arrows in our subgroup lattice, despite the fact that the associated components in the moduli are not attached directly to one another.
    \item This line of reasoning fails for isomorphic subgroups which are not conjugate --- given an $S_4$-symmetric cubic surface, the restricted action on its lines via the two non-conjugate cyclic subgroups of order two are different, as computed in \cite[6.5]{EEG}.
\end{enumerate}

\end{remark}

\bibliographystyle{amsalpha}
\bibliography{citations.bib}{}
\end{document}